\documentclass{amsart}
\usepackage{amssymb, mathrsfs}
\usepackage[mathlines]{lineno}

\title{Post-critically finite polynomials in characteristic $p$}
\author{Patrick Ingram}
\address{Department of Mathematics and Statistics, York University}
\email{pingram@yorku.ca}

\newcommand{\QQ}{\mathbb{Q}}

\newcommand{\CC}{\mathbb{C}}
\newcommand{\FF}{\mathbb{F}}

\renewcommand{\AA}{\mathbb{A}}

\newcommand{\Res}{\operatorname{Res}}

\newcommand{\ord}{\operatorname{ord}}

\newcommand{\Poly}[1]{\mathscr{P}_{#1}}
\newcommand{\PCF}{\operatorname{PCF}}
\newcommand{\Null}[1]{\mathscr{N}_{#1}}

\newtheorem{theorem}{Theorem}
\newtheorem{lemma}[theorem]{Lemma}

\theoremstyle{remark}

\theoremstyle{definition}

\newtheorem{question}[theorem]{Question}

\begin{document}
\begin{abstract}
It follows from the compactness of the connectedness locus that the locus of PCF polynomials  in the moduli space of polynomials of degree $d\geq 2$ over $\CC$ is a countable union of zero-dimensional subvarieties. Over fields of characteristic $p$ the analogous claim is certainly false, so it is natural to ask how large the components of the PCF locus can be. We show that the components of the PCF locus in the moduli space of polynomials of degree $d\geq 2$ over an algebraically closed field of characteristic $p>0$ have dimension at most $\lfloor d/p\rfloor$ and that this bound is sharp, in the strong sense that for every $p$ and $d$ there are infinitely many distinct PCF components whose dimension attains this bound (except in the case $p=d=2$).
\end{abstract}
\maketitle

\section{Introduction}\label{sec:intro}

Let $k$ be an algebraically closed field. A polynomial $f(z)\in k[z]$ of degree $d\geq 2$ is \emph{post-critically finite} (PCF) if and only if the support of the \emph{critical locus}, the subscheme of $\AA^1$ defined by $f'=0$, is preperiodic under iteration by $f$. If the derivative $f'$ is not identically zero (for example, if $d$ is not divisible by $\operatorname{char}(k)$) then being PCF is equivalent to requiring that the forward orbit $\{c, f(c), f^2(c), \ldots\}$ is finite for each critical point $f'(c)=0$. If $f'=0$ identically, then $f$ is trivially PCF, although there may be points $f'(c)=0$ with infinite forward orbit.

When $k$ has characteristic zero, it follows from the compactness of the connectedness locus for polynomials over $\CC$ (see~\cite[Corollary~3.7]{BrannerHubbard}) that every algebraic family of PCF polynomials up to affine conjugacy has dimension zero. Equivalently, every PCF map defined over a field of characteristic zero is conjugate to one whose coefficients are algebraic over the prime field $\QQ$.

This fails in characteristic $p>0$. For $p\nmid d>p$, the polynomial $z^d+tz^p\in\FF_p(t)[z]$ has only the critical point $z=0$, which is fixed by $f$, but one can check that $f$ is not conjugate to any polynomial defined over $\overline{\FF_p}$. So in characteristic $p>0$, the PCF locus in the moduli space of polynomials of degree $d\geq 2$ may contain algebraic families of positive dimension, and so the rigidity property familiar over $\CC$ does not hold in general. It is natural to ask, though, if we can say anything at all about the dimensions of algebraic families of PCF maps in positive characteristic. Since the property of being PCF is conjugacy-invariant, it makes sense to restrict to a space of polynomials that parametrizes conjugacy classes in a finite-to-one way.
 We write
\[
\Poly{d}=\left\{f(z)=z^d+a_{d-1}z^{d-1}+\cdots+a_1z\right\}\cong\AA^{d-1},
\]
and $\PCF_d\subseteq\Poly{d}$ for the locus of PCF polynomials. We should emphasize that by a subvariety of the PCF locus, we mean a subvariety $V\subseteq \Poly{d}$ whose generic point is PCF. Since every element of $\Poly{d}(\overline{\FF_p})$ is PCF, it does not make sense to consider merely subvarieties $V\subseteq \Poly{d}$ whose every closed point over some particular algebraically closed field is PCF.

\begin{theorem}\label{thm:main}
Let $k$ be an algebraically closed field of characteristic $p>0$, and let $d\geq 2$. Then $\PCF_d\subseteq\Poly{d}$ is a countable union of subvarieties, each of dimension at most $\lfloor d/p\rfloor$, and this bound is attained by infinitely many components in all cases except $(p, d)=(2, 2)$.
\end{theorem}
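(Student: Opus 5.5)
Write $f'(z)=\sum_{i} i a_i z^{i-1}$, where $a_d=1$. Only the coefficients $a_i$ with $p\nmid i$ appear in $f'$. There are roughly $d-1-\lfloor d/p\rfloor$ such coefficients among $a_1,\dots,a_{d-1}$, together with the leading term when $p\nmid d$. The coefficients $a_i$ with $p\mid i$ do not affect the critical locus at all. The plan is to show that, on a PCF family, the coefficients with $p\nmid i$ are essentially forced to be algebraic over $\FF_p$, so that only the roughly $\lfloor d/p\rfloor$ coefficients $a_{pj}$ (together possibly with one extra parameter when $p\mid d$) can vary.

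\textbf{Upper bound.} Let $V\subseteq\PCF_d$ be irreducible with function field $K=k(V)$, and let $f\in K[z]$ be the generic member. Choose a place $v$ of $K$ trivial on $k$ and consider the $v$-adic dynamics, in the spirit of function-field rigidity results (Benedetto, Baker). Since $f$ is PCF, every critical point has finite orbit, so it has canonical height zero, and hence lies in the $v$-adic filled Julia set for every $v$. I would use this to show that the derivative $f'$ must have coefficients lying in a subfield of transcendence degree $0$ over $\FF_p$, up to the action of $\Poly{d}$ by conjugation. The argument would go through a Puiseux-type expansion near each $v$: if some coefficient of $f'$ has negative valuation, some critical point escapes. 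The subtle point is that in characteristic $p$ a polynomial can have "bad reduction" while the critical points remain bounded, precisely because of the $a_{pj}z^{pj}$ terms. I expect this step to be the main obstacle. To handle it, I would first try the following: write $f=g(z)+h(z^p)$, where $g$ has no monomials of degree divisible by $p$. I would then show that the escape rate at $v$ of a critical point is controlled by $\max_i |a_i|_v$ over $p\nmid i$ versus $|h|_v$. The goal is to conclude that the map $V\to\AA^{\#\{i:p\nmid i\}}$, $f\mapsto(a_i)_{p\nmid i}$, has image of dimension $0$. If it does, then $\dim V\le\#\{j: 1\le pj\le d-1\}$, plus a correction when $p\mid d$ coming from the leading-term normalization. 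Together these give $\dim V\le\lfloor d/p\rfloor$.

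\textbf{Countable union.} Countability follows because $\PCF_d$ is the union over pairs $(m,n)$ and combinatorial critical portraits of the Zariski-closed sets $\{f^{m+n}(c)=f^m(c)\text{ for all }c\text{ with }f'(c)=0\}$, suitably interpreted via resultants. Each such set has finitely many components.

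\textbf{Sharpness.} For sharpness I would construct explicit families. Fix $g(z)$ with coefficients in $\overline{\FF_p}$, supported on exponents prime to $p$, chosen so that $f'=g'$ has all its roots at a single point, say $z=0$. Take $f(z)=g(z)+\sum_{pj<d} t_j z^{pj}$, or include $z^d$ in the family when $p\mid d$. Then impose that $0$ is preperiodic with some chosen exact portrait. This costs only a bounded number of conditions, which can be absorbed by varying $g$ or a fixed-point normalization. The result is a family whose dimension is exactly $\lfloor d/p\rfloor$ after quotienting by the finite conjugation action. Varying the preperiod $(m,n)$ gives infinitely many distinct components. The simplest example is the one in the introduction, $z^d+tz^p$. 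The case $(p,d)=(2,2)$ fails because $\Poly{2}=\{z^2+a_1z\}$ has $f'=a_1$, which forces either $a_1=0$ (a single point) or no critical points at all. I would check this degenerate case by hand.
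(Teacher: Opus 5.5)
\textbf{The upper bound rests on a false claim.} You propose that on a PCF family the coefficients $a_i$ with $p\nmid i$ are algebraic over $\FF_p$, so that only the $a_{pj}$ can vary. This is false, and for $p\nmid d$ the maximal families typically do the opposite.

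Write $d=np+r$ with $1\le r<p$, and take $f(z)=z^rH(z)^p$ with $H$ monic of degree $n$ with algebraically independent coefficients $h_j$. Then $f'(z)=rz^{r-1}H(z)^p$, so every critical point lands on the fixed point $0$, and $f$ is PCF with $n$ independent parameters. But $f(z)=\sum_j h_j^p z^{r+pj}$, so every varying coefficient sits at an index prime to $p$ and every $a_{pj}$ vanishes.

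Your supporting heuristic also fails. Take $f(z)=z^5+tz^2$ over $\FF_3$ and a place with $|t|>1$. Then $f'(z)=2z(z+t^{1/3})^3$ has an unbounded coefficient, yet both critical points map to $0$. So working with places trivial on $k$ aims at a false target. You need absolute values trivial on a subfield generated by well-chosen coefficients (Lemma~\ref{lem:valcrit}).

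When $p\mid d$, your splitting $f=g(z)+h(z^p)$ does work, but with target field $F=\FF_p(a_e,a_p,\ldots,a_{d-p})$, where $e$ is the largest index with $p\nmid e$ and $a_e\neq 0$. For $|\cdot|$ trivial on $F$, suppose a critical point $c$ of maximal absolute value had $|c|=\rho>1$. Then $|g(c)|\le\rho^e<\rho^d=|h(c^p)|$, and $c$ would escape.

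When $p\nmid d$, no such direct estimate is available. The paper uses an ingredient your plan has no analogue of:
\begin{enumerate}
\item a Rees-algebra degeneration ($a_j\mapsto t^{d-j}a_j$, then reduce modulo a minimal prime over $t$), which replaces a PCF polynomial by a member of the critical nullcone $\Null{d}$ with the same transcendence degree;
\item the structure result (Lemma~\ref{lem:twocases}, via Lemma~\ref{lem:phifinite}) that the coefficients of any $f\in\Null{d}$ are algebraic over those of $f^{[0]}$ or of $f^{[r]}$, each of which involves at most $n$ coefficients outside $\FF_p$.
\end{enumerate}

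\textbf{The sharpness construction gives one component, not infinitely many.} Every $f\in\Poly{d}$ fixes $0$. So in a family such as $z^d+\sum_j t_jz^{pj}$, whose only critical point is $0$, that critical point is always fixed. There is no preperiod to vary, so you get a single component; for $d<p$ you get only $z^d$.

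To fix this, put the critical point at a free parameter $\zeta\neq 0$. For example, take $z^r(z-\zeta)^{np}+Tz^p+\sum_{i\ge 2}u_iz^{ip}$, whose derivative is $rz^{r-1}(z-\zeta)^{np}$, and solve for $T$ so that $\zeta$ has exact preperiod $m$. The substance is proving that the resulting polynomial in $T$ is nonconstant. This needs a separate computation when $d=p+1$, and separate families when $p\mid d$ and when $d\le p$ (for $d<p$ you need infinitely many PCF maps over $\overline{\FF_p}$ with distinct portraits).

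Also, $(2,2)$ is exceptional because $z^2+tz$ has no critical points. Hence all of $\Poly{2}$ is PCF and forms exactly one component; your remark about that case misses this.

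Your countability argument via resultants is fine and matches the paper.
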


The exception in the case $p=d=2$ is because the PCF locus is all of $\Poly{d}\cong \AA^1$ in this case. Note that some previous results in this direction exist. Levy~\cite{LevyRigidity} proved a rigidity theory for tamely ramified PCF maps with a superattracting cycle, which in particular implies Theorem~\ref{thm:main} in the case $2\leq d<p$. This case can also be concluded from~\cite{IngramPCF}. Fraser and the author~\cite{FraserIngram} also demonstrated rigidity for PCF polynomials of the form $f(z)=g(z^d)$, where $\deg(g)<p$. Results of Epstein~\cite{Epstein} show rigidity of PCF polynomials in characteristic zero, but with approaches that are relevant to positive characteristic. We should emphasize that by a subvariety of the PCF locus, we mean an irreducible closed subvariety $V\subseteq \Poly{d}$ whose generic point is PCF, and by a component of the PCF locus we mean a subvariety of the PCF locus that is not properly contained in any other.

Equivalently, we can rephrase Theorem~\ref{thm:main} in terms of individual polynomials.

\begin{theorem}\label{thm:pointwise}
Let $k$ be an algebraically closed field of characteristic $p>0$ and let $f\in k[z]$ be a PCF polynomial of degree $d\geq 2$. Then $f$ is conjugate to a polynomial in $\Poly{d}$, and every conjugate of $f$ in $\Poly{d}$ has coefficients generating an extension of $\FF_p$ of transcendence degree at most $\lfloor d/p\rfloor$. Moreover, there are infinitely many pairwise non-conjugate examples of PCF polynomials $f$ defined over fields of positive characteristic witnessing this bound in all cases except $(p, d)=(2, 2)$.
\end{theorem}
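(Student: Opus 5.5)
Theorem~\ref{thm:pointwise} should follow from Theorem~\ref{thm:main} by a translation between points and subvarieties, and I will treat it that way. The proof will have three steps.

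\textbf{Step 1: normal form.} Given $f=\sum b_i z^i$ with $b_d\neq 0$, conjugate by $z\mapsto \alpha z+\beta$ with $\alpha^{d-1}=b_d^{-1}$ to make $f$ monic. A further translation $z\mapsto z+\beta$ makes the constant term vanish as soon as $\beta$ is a root of $f(z+\beta)-\beta$ evaluated at $0$, that is, a fixed point of $f$. Since $k$ is algebraically closed and $d\geq 2$, such a $\beta$ exists. The result lies in $\Poly{d}$, and every step preserves the PCF property. The monic, vanishing-constant normalizations form a finite set: they correspond to choices of a fixed point and of a $(d-1)$st root of unity. So the conjugates of $f$ in $\Poly{d}$ form finitely many points. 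The coefficients of any two of them generate fields that are algebraic over each other, because each is obtained from the other by a conjugation whose parameters are algebraic. Hence they have the same transcendence degree over $\FF_p$.

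\textbf{Step 2: upper bound.} Let $f\in\Poly{d}(k)$ be PCF, and let $K=\FF_p(a_1,\dots,a_{d-1})$ with transcendence degree $t$. The point $f$ is the specialization of the generic point of the irreducible variety $V\subseteq\Poly{d}$ defined over $\overline{\FF_p}$ as its Zariski closure over the prime field, and $\dim V=t$. The generic point of $V$ is $f$ itself, viewed over $K$, up to isomorphism of fields. Being PCF is a statement about the polynomial over $\overline{K}$, so it is preserved under field isomorphism. Therefore $V$ is a subvariety of $\PCF_d$ in the sense of the paper. Theorem~\ref{thm:main} then gives $t=\dim V\le\lfloor d/p\rfloor$.

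One care point belongs in this step. The transcendence degree is measured over $\FF_p$, so I will work with the components defined over $\overline{\FF_p}$. Theorem~\ref{thm:main} over $k$ bounds every subvariety with PCF generic point, and in particular this one.

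\textbf{Step 3: sharpness.} Theorem~\ref{thm:main} supplies infinitely many components $V$ of dimension $\lfloor d/p\rfloor$. Take the generic point of each such $V$ over a field isomorphic to its function field, and embed it into a sufficiently large algebraically closed field $k$. This gives a PCF polynomial whose coefficients have transcendence degree exactly $\dim V$. Distinct components have distinct generic points. By Step 1, conjugates in $\Poly{d}$ differ only by finitely many algebraic transformations, and these permute components. Therefore infinitely many distinct components yield infinitely many pairwise non-conjugate examples.

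The main obstacle is this last point, so I will check it explicitly. The finite group action on $\Poly{d}$ by conjugations preserving the normal form sends components to components with finite orbits, so infinitely many components produce infinitely many orbits. I also expect subtlety in the base-field bookkeeping of Step 2, where $k$ may be larger than needed, which is why that step works over $\overline{\FF_p}$.
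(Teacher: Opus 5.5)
Your proposal is circular. Theorem~\ref{thm:main} is not an independent result you may invoke. The paper calls it a rewording of Theorem~\ref{thm:pointwise}, and its proof of Theorem~\ref{thm:main} consists of the same point/subvariety translation you use, followed by an appeal to Theorem~\ref{thm:pointwise}. Its ``infinitely many components'' clause is likewise deduced from the explicit polynomials $f_m$ built in Lemma~\ref{lem:ptw}. So what you have is a sketch of the equivalence of the two statements, plus the easy normal-form observation of Step~1, which is fine. Neither the transcendence-degree bound nor the sharpness is actually proved. Even within the translation, Step~3 is loose in two ways. The generic point of a component over a large $k$ has coefficient field of transcendence degree \emph{at least} $\dim V$ over $\FF_p$, and only the upper bound forces equality. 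Also, translation by a fixed point is a finite correspondence on $\Poly{d}$, not a group action.

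The missing content has three parts.

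\emph{Case $p\mid d$, $f$ separable.} Let $e$ be the largest index prime to $p$ with $a_e\neq 0$, and take any absolute value trivial on $F=\FF_p(a_e,a_p,\dots,a_{d-p})$. Suppose the largest critical point had $|c|=\rho>1$. Then $|a_j|\le\rho^{e-j}$ for $p\nmid j$, and the $H(z^p)$ part dominates, so $|f(c)|=\rho^d$. This contradicts the escape bound $|f(c)|\le R_f\le\rho$. Hence all $|c|\le 1$ and all $|a_j|\le 1$, and Lemma~\ref{lem:valcrit} (Chevalley) gives algebraicity over the $d/p$ generators of $F$.

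\emph{Case $p\nmid d$.} You need the degeneration of Lemma~\ref{lem:degen}: take the Rees algebra modulo a minimal prime over $tR$. This moves a PCF map into the critical nullcone $\Null{d}$ without losing transcendence degree. Then you need the structure of $f=H^pB\in\Null{d}$. The roots of $C$ lie among those of $H$, and Lemma~\ref{lem:phifinite} makes the roots of $B$ algebraic over $b_0,b_p,\dots,b_{\ell p}$ together with the roots of $H$. These are $n+1$ quantities, and one of them vanishes since $f(0)=0$.

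\emph{Sharpness.} This needs actual examples. For $d=np+r$ with $0<r<p$, take the maps $z^r\bigl((z-\zeta)^{np}+u_1z^{p-r}+\sum_{i\ge 2}u_iz^{ip-r}\bigr)$. Their critical points are $0$ (fixed) and $\zeta$. Choose $u_1$ a root of $P_m(T)=F_T(f_T^{m-1}(\zeta))$, so that $\zeta$ has exact preperiod $m$. The work lies in showing $P_m$ is nonconstant and that the specialization keeps transcendence degree $n$, plus separate families for $p\mid d$ and $d\le p$. Distinct preperiods then give non-conjugacy.
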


Note that in demonstrating the strong sharpness of the bound, we prove something somewhat stronger than stated above.  As noted above, the map $z^d+tz^p$ is PCF for $p\nmid d>p$, and one can produce infinitely many non-conjugate examples over $\FF_p(t)$ just by replacing $t$ by $t^m$. These maps are non-conjugate, but they are related by automorphisms of the algebraic closure of $\FF_p(t)$, and one might worry that the examples in Theorem~\ref{thm:pointwise} are similarly generated from a single polynomial in each case. In fact, we prove distinctness by generating examples with different critical portraits, which cannot be related either by conjugacy or by field automorphisms.

When a theorem claims that a polynomial has a conjugate with a certain property, it is natural to ask whether one can effectively find this conjugate, and in this case the answer is straightforward. As mentioned above, every polynomial is conjugate to a monic polynomial fixing zero (i.e., a representative in $\Poly{d}$) and it is this conjugate whose coefficients will satisfy the transcendence bound if the polynomial is PCF. Similarly, at least when $p\nmid d$ it is easy to write down a PCF polynomial with $n=\lfloor d/p\rfloor$ algebraically independent coefficients, namely $f(z)=z^r H(z)^p$, where $H(z)$ is a monic polynomial of degree $n$ with algebraically independent coefficients, and $r=d-np$. See also examples constructed by Faber~\cite{Faber} of positive-dimensional families of rational functions with a unique critical point, which are PCF.  It is somewhat more delicate to write down infinitely many examples, and to confirm that they are pairwise non-conjugate.

In certain cases of the proof of Theorem~\ref{thm:pointwise}, we show something slightly more explicit. Specifically, we show that under certain additional hypotheses, the coefficients of any PCF polynomial $f$ are algebraic over a specific subset of at most $\lfloor d/p\rfloor$ of these coefficients. In the general case of Theorem~\ref{thm:pointwise}, though, we cannot give such a clear description of the field generated by the coefficients.
In characteristic $2$ we can make a more detailed analysis that accomplishes this in general.

\begin{theorem}\label{thm:char2}
Let $k$ be an algebraically closed field of characteristic $2$, and let \[f(z)=z^d+a_{d-1}z^{d-1}+\cdots+a_1z\in k[z]\] be a PCF polynomial. Then exactly one of the following holds:
\begin{enumerate}
\item $d$ is odd and $f(z)=zH(z)^2$ for a monic polynomial $H$, and so in particular $f$ is defined over the field $\FF_2(a_1, a_3,  ..., a_{d-2})$;
\item $d$ is odd and $f$ is not of the above form, and every coefficient of $f$ is algebraic over the field
$\FF_2\!\left(a_2, a_4, \ldots, a_{d-1}\right)$; or
\item  $d$ is even and for some $1\leq e<d$ every coefficient of $f$ is algebraic over the field
$\FF_2\!\left(a_2, a_4, \ldots, a_{d-2}, a_e\right)$.
\end{enumerate}
\end{theorem}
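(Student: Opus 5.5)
The starting point is the characteristic-$2$ identity that splits $f$ into its even and odd parts:
\[
f(z)=A(z)^2+zB(z)^2,\qquad A(z)^2=\sum_{i\ \mathrm{even}} a_i z^i,\quad zB(z)^2=\sum_{i\ \mathrm{odd}} a_i z^i .
\]
Then $f'(z)=B(z)^2$, so the critical points are exactly the roots of $B$. For a critical point $c$ the odd part vanishes, so $f(c)=A(c)^2$, and the first iterate of $c$ depends only on the even coefficients. The trichotomy is then a case analysis on $\deg f$ and on whether $A$ or $B$ vanishes. Since $f(0)=0$ we have $A(0)=0$. If $d$ is odd, then $B$ is monic of degree $m=(d-1)/2$ and $A$ has degree at most $m$. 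If $d$ is even, then $A$ is monic of degree $d/2$ and $\deg B\le (d-2)/2$.

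\emph{Case (1).} If $d$ is odd and $A=0$, then $f=zB^2$ with $B=H$ monic, and $f$ is visibly defined over $\FF_2(a_1,a_3,\dots,a_{d-2})$. Nothing more is needed here.

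\emph{Case (2).} Let $d$ be odd with $A\neq 0$, and put $L=\overline{\FF_2(a_2,\dots,a_{d-1})}$. Suppose some odd coefficient is transcendental over $L$. Then there is a nontrivial absolute value $|\cdot|$ on $k$, trivial on $L$, with $|a_i|>1$ for some odd $i$. This is a non-archimedean place of the function field, extended to $k$. I aim to show some critical point then has unbounded orbit, contradicting PCF.

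\begin{enumerate}
\item Let $R=\max_i |a_i|^{1/(d-i)}$. Since the even coefficients are $v$-units or $0$, $R>1$ comes from the odd coefficients, hence from the coefficients of $B$.
\item By the Newton polygon, the monic polynomial $B$ has a root $c$ with $|c|$ comparable to $R$.
\item We have $f(c)=A(c)^2$ with $|A(c)|=\max(1,|c|)^{\deg A}$ generically, since the coefficients of $A$ are units.
\item I then compare $|f^n(c)|$ with the escape radius $R$ of $f$, using the standard non-archimedean fact that $|z|>R$ forces $|f(z)|=|z|^d$.
\end{enumerate}

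\emph{Main obstacle.} The difficulty is that $|A(c)|^2$ may fall below $R$, so that $f(c)$ does not escape immediately. This is exactly what happens for $zH^2$, where every critical point maps to $0$. To handle it, I would use the non-archimedean Green function $G_f$ and a Branner--Hubbard-type properness estimate, namely $\max_{f'(c)=0}G_f(c)\gg \log R$. I would argue this estimate fails only if every critical value lands in a small disk $D$ about $0$, and then examine $f$ on $D$. Because $A\ne0$ has unit coefficients, the even part produces a term that dominates $zB^2$ on a suitable annulus. Pushing this forward with a Newton-polygon bookkeeping argument on $f^n$ should show that some critical orbit eventually leaves the disk of radius $R$. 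This dichotomy step is the part I expect to be hardest.

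\emph{Case (3).} Let $d$ be even. If $B=0$, then $f=A^2$ and every coefficient lies in the field generated by the even coefficients $a_2,\dots,a_{d-2}$, so any $e$ works. Otherwise let $\deg B=m$ and set $e=2m+1$, so that $a_e$ is the leading coefficient of $zB^2$. Write $B=\sqrt{a_e}\,\tilde B$ with $\tilde B$ monic. The critical points are the roots of $\tilde B$, and $f(c)=A(c)^2$ as before. Now run the same valuation argument over $\overline{\FF_2(a_2,\dots,a_{d-2},a_e)}$. The leading term $z^d$ of $A^2$ is monic with unit coefficients, so it plays the role that the monic $B$ played in the odd case. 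The same estimate should show that a place making a coefficient of $\tilde B$ large forces a critical orbit to escape. This proves that all coefficients are algebraic over $\FF_2(a_2,\dots,a_{d-2},a_e)$.
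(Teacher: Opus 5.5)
Your setup matches the paper's. The splitting $f=A^2+zB^2$ is the identity $f(z)-zf'(z)=\sum_i a_{2i}z^{2i}$ used there, the case split is the same, and Lemma~\ref{lem:valcrit} is the right criterion. The gap is at the one step with real content: you never prove that a critical orbit escapes. You flag an obstacle (``$|A(c)|^2$ may fall below $R$'') and propose to get around it with a Branner--Hubbard-type bound $\max_{f'(c)=0}G_f(c)\gg\log R$ plus Newton-polygon bookkeeping on $f^n$. None of this is carried out, and case (3) only says ``the same estimate should show''. Such a properness estimate is also not available as a black box in characteristic $p$. The family $zH(z)^2$ you mention, or $z^d+tz^2$ with $d$ odd, has every critical orbit bounded while $R\to\infty$; this is exactly how positive-dimensional PCF families arise. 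So any such estimate would have to be proved from the specific hypotheses here.

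The obstacle disappears once $c$ is chosen correctly, and escape then happens after one iterate. Let $c$ be a root of $B$ of \emph{maximal} absolute value $\rho$, and suppose $\rho>1$.
\begin{itemize}
\item $B^2=\sum_{i\ \mathrm{odd}}a_iz^{i-1}$ is monic with all roots of absolute value at most $\rho$. So each odd $a_i$ is an elementary symmetric function of degree $d-i$ in these roots, and $|a_i|\le\rho^{d-i}$. The even $a_i$ satisfy $|a_i|\le1$.
\item Hence $R_f\le\rho$ in Lemma~\ref{lem:escape}. In fact $R_f=\rho$ by the Newton polygon, so your ``comparable'' is really ``equal''.
\item On the other side, $A\neq0$, $A(0)=0$, and each nonzero coefficient of $A$ is a square root of a nonzero element of $\FF_2(a_2,\dots,a_{d-1})$, so it has absolute value $1$. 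Thus $j=\deg A\geq1$ and $|A(c)|=\rho^{j}$ exactly.
\item This gives $|f(c)|=\rho^{2j}>\rho\geq R_f$, and Lemma~\ref{lem:escape} contradicts PCF.
\end{itemize}
Therefore $\rho\le1$ for every absolute value trivial on $\FF_2(a_2,\dots,a_{d-1})$. So the critical points, and with them the odd coefficients, are algebraic over that field. The situation you feared, $|A(c)|^2<R$, occurs only when $A=0$, which is case (1).

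For even $d$ the same computation works with $|a_e|=1$. It gives $|a_i|\le\rho^{e-i}$ for odd $i$, hence $R_f\le\rho$, while $|f(c)|=|A(c)|^2=\rho^{d}>\rho$. This is exactly Lemma~\ref{lem:boundPCF}, which the paper simply cites for case (3). No Green function or analysis of $f^n$ is needed.
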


The proof of Theorem~\ref{thm:pointwise} splits into two cases, according to whether or not $p\mid d$. Perhaps surprisingly, the case $p\mid d$ is quite straightforward, following from a theorem of Chevalley and some elementary estimates in non-archimedean dynamics. Specifically we note that $\alpha\in L/F$ is algebraic over $F$ if and only if $|\alpha|\leq 1$ for every absolute value on $L$ that restricts to the trivial absolute value on $F$. If $f(z)$ is a polynomial in $z^p$ then the result holds trivially, so let $e$ be the largest index with $a_e\neq 0$ and $p\nmid e$. We show by elementary means that any absolute value on $\FF_p(a_1, ..., a_{d-1})$ such that $|a_e|=1$ and $|a_{jp}|\leq 1$ for all $1\leq j<d/p$ must have $|c|\leq 1$ for each critical point $c$ of $f$, and then $|a_i|\leq 1$ for all $i$. It follows that the coefficients of $f$ are algebraic over $\FF_p(a_p, ..., a_{d-p}, a_e)$ which establishes Theorem~\ref{thm:pointwise} in this case.

The case $p\nmid d$ requires more machinery. First, we consider the \emph{critical nullcone} $\Null{d}\subseteq \Poly{d}$ which is defined to be the locus of polynomials $f\in \Poly{d}$ such that $f(c)=0$ for every critical point $f'(c)=0$. The main result of~\cite{IngramPCF} rests on the observation that $\Null{d}=\{z^d\}$ when $\operatorname{char}(k)=0$, and so when $\operatorname{char}(k)>0$ we start here as well. Concretely, for a particular map $f(z)\in \Null{d}$ we write $f(z)=B(z)H(z)^p$ with $\deg(H)$ maximal. The counting argument on orders of roots that characterizes $\Null{d}$ in the characteristic 0 case fails in positive characteristic because of roots of order divisible by $p$, but understanding the coefficients in $B$ of powers of $z^p$ allows us to avoid this. We essentially show that $f$ can be recovered from the coefficients of $H$ and just this subset of the coefficients of $B$, which is at most $n$ coefficients. This proves Theorem~\ref{thm:pointwise} for polynomials in the nullcone. A standard Gr\"{o}bner deformation argument, similar in spirit to arguments used by Levy~\cite{LevyRigidity}, then shows that any PCF polynomial $f\in \Poly{d}$ can be algebraically deformed to one in $\Null{d}$ in a way that preserves the transcendence degree of the field generated by the coefficients, establishing the claim. Theorem~\ref{thm:main} is essentially a rewording of Theorem~\ref{thm:pointwise}, while Theorem~\ref{thm:char2} comes from a more hands-on analysis in this special case.

Our results concern only polynomials, and very little seems to be known about PCF rational functions in positive characteristic. In characteristic zero, it follows from work of Thurston~\cite{DouadyHubbard} that there are no non-isotrivial families of PCF maps other than the Latt\`{e}s examples) while Benedetto, the author, Jones and Levy~\cite[Corollary 6.3]{BIJL} and Pink~\cite{Pink} prove rigidity of PCF quadratic rational functions in odd positive characteristic.  As mentioned above, examples of Faber~\cite{Faber} demonstrate the existence of high-dimensional families of PCF maps in other cases. Presumably, we would start with the following question.
\begin{question}
Let $\mathcal{M}_d$ be the moduli space of rational functions of degree $d\geq 2$ over $\FF_p$, and let $f\in\mathcal{M}_d$ be a PCF point over some extension of $\FF_p$. How large can the transcendence degree of the field of moduli of $f$ be? What is the largest transcendence degree that occurs for maps $f$ with arbitrarily large critical orbits?
\end{question}
Just as in the characteristic zero case, it may be that there are certain high-dimensional families that are easily explained, or have particular critical dynamics, while general PCF families have lower dimension.

In Section~\ref{sec:escape} we use elementary estimates from non-archimedean dynamics along with a theorem of Chevalley to prove Theorem~\ref{thm:pointwise} in the case $p\mid d$.  Section~\ref{sec:nullcone} delves into the geometry of the nullcone, showing that it has no components of dimension greater than $n$, and then using a Gr\"{o}bner deformation argument to derive the case of Theorem~\ref{thm:pointwise} in which $p\nmid d$. Finally, Section~\ref{sec:max} constructs infinite families of maximal dimension, and Section~\ref{sec:proofs} ties together all of the components in the proofs of the three assertions above.

\subsection*{Formalization} A Lean proof of the main result has been produced by Codex and Claude Code, from a draft of this paper and with guidance from the author. The formalization is available at \texttt{https://github.com/pingram3/pcf-char-p-lean}. It is worth noting what this is and what this is not. The Lean proof formalizes the proofs of Theorems~1, 2, and~3 relative to the Lean kernel and MathLib. In practice, many of the numbered intermediate results have been formalized and can be easily identified in the Lean code. The main difference between the exposition here and the Lean proofs is that we use real-valued valuations in the parts of the argument that rely on escape of orbits relative to non-archimedean absolute values, while the formal version uses Krull valuations (which take values in an ordered abelian group). Concretely, any statement of the form ``There exists an absolute value...'' or logically equivalent to this, which the paper tacitly qualifies over real-valued valuations only, has not been formalized. The author's human-readable proof still stand in those cases, but the Lean proof gets at the main results (as stated) by a slightly different path.

\subsection*{Acknowledgements}
The author's research is supported by grants from NSERC and the Fields Institute.
The research reported in this paper was assisted by the use of various digital tools, including large language models. Discussions with consumer-grade models from OpenAI and Anthropic contributed to the ideas that extended these results from the case $p\leq d< 2p$ to the general case.  Singular and Sage computations were part of the exploratory process that led to the results in this paper, and the author also made use of Google's search engine in locating various resources.



\section{Absolute values and escape}\label{sec:escape}

We begin with the tools and techniques that will allow us to present the proof of Theorem~\ref{thm:pointwise} in the case $p\mid d$. 
We start with a formulation of algebraicity in terms of absolute values on fields, which will allow us to use standard estimates in non-archimedean dynamics. (Note that,  since we are ultimately working in fields of positive characteristic, all absolute values are non-archimedean.)

\begin{lemma}\label{lem:valcrit}
Let $L/K$ be an extension of fields and let $\alpha\in L$. Then $\alpha$ is algebraic over $K$ if and only if $|\alpha|\leq 1$ for every non-archimedean absolute value on $L$ that restricts to the trivial absolute value on $K$.
\end{lemma}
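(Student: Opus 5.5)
The plan is to prove the two directions separately. The forward direction is elementary, while the converse requires constructing an absolute value.

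For the forward direction, suppose $\alpha$ is algebraic over $K$. Write a relation $\alpha^n + c_{n-1}\alpha^{n-1}+\cdots+c_0=0$ with $c_i\in K$; we may take the minimal polynomial, so that it is monic. Let $|\cdot|$ be any non-archimedean absolute value on $L$ that is trivial on $K$, so $|c_i|\leq 1$ for all $i$. If $|\alpha|>1$, then $|\alpha^n|>|c_i\alpha^i|$ for every $i<n$. The ultrametric inequality then gives $|\alpha^n+\sum c_i\alpha^i|=|\alpha|^n>0$, which is a contradiction. Hence $|\alpha|\leq 1$.

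For the converse, suppose $\alpha$ is transcendental over $K$. The goal is to produce a non-archimedean absolute value on $L$, trivial on $K$, with $|\alpha|>1$.
\begin{enumerate}
\item On $K(\alpha)\cong K(t)$, take the degree absolute value $|g/h|=e^{\deg g-\deg h}$, i.e.\ the valuation at infinity. It is trivial on $K$ and satisfies $|\alpha|=e>1$.
\item Extend this to the algebraic closure $L_1$ of $K(\alpha)$ in $L$. Extensions of absolute values along algebraic extensions always exist; this follows from Chevalley's extension theorem, or, for finite subextensions, from completion.
\item Pass from $L_1$ to $L$, which may be a transcendental extension. Choose a transcendence basis $\{x_i\}$ of $L/L_1$. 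Extend the absolute value to $L_1(\{x_i\})$ by the Gauss norm, i.e.\ the sup of the absolute values of the coefficients, taken over a monomial ordering; this is multiplicative by Gauss's lemma. Then extend once more along the algebraic extension $L/L_1(\{x_i\})$.
\end{enumerate}
The resulting absolute value is trivial on $K$ and has $|\alpha|>1$, which proves the contrapositive.

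I expect the main obstacle to be keeping the extensions real-valued. Chevalley's theorem in full generality produces valuation rings, i.e.\ Krull valuations, rather than real absolute values. Step~3 handles the transcendental part while staying rank one, and for the algebraic steps I would extend along each finitely generated subextension via completion. To pass to arbitrary algebraic extensions I would then use Zorn's lemma, or alternatively extend the absolute value to the completion and then to the algebraic closure of the completion, which is uniquely and compatibly normed. The Gauss norm step, by contrast, is routine once the valuation on $L_1$ is in hand.
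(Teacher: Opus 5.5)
Your proof is correct and follows the paper's route: the forward direction applies the ultrametric inequality to the monic minimal polynomial, and the converse puts the degree absolute value on $K(\alpha)\cong K(T)$ and extends it to $L$. The only difference is that the paper gets the extension in one step by citing Chevalley's theorem, whereas you spell out why it can be kept real-valued (Gauss norm on a transcendence basis, then algebraic extension via completions), a point the paper leaves implicit.
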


\begin{proof}
One direction is familiar. If $\alpha\neq 0$ is algebraic over $K$, let $P(T)\in K[T]$ be its minimal polynomial. For any non-archimedean absolute value $|\cdot|$ on $L$ restricting trivially to $K$, the assumption $|\alpha|>1$ would give $0=|P(\alpha)|=|\alpha|^{\deg(P)}\neq 0$ by the ultrametric inequality, and hence a contradiction. So we have $|\alpha|\leq 1$ for every non-archimedean absolute value on $L$ that extends the trivial absolute value on $K$.

On the other hand, suppose that $\alpha$ is transcendental over $K$, so that $K(\alpha)\cong K(T)$ is just the field of rational functions in one variable over $K$. The familiar absolute value
\[|g_0(T)/g_1(T)|:=\exp\left(\deg(g_0)-\deg(g_1)\right)\]
for polynomials $g_i$
induces an absolute value on $K(\alpha)$ which restricts trivially to $K$, and has $|\alpha|=e>1$. By a theorem of Chevalley (see for example \cite[Theorem~1, p.~107]{Ribenboim}) this absolute value can be extended to $L$, proving the contrapositive of the other direction of the claim.
\end{proof}

The next two lemmas are the standard escape-rate estimates.

\begin{lemma}\label{lem:escape}
Let $|\cdot|$ be a nonarchimedean absolute value on a field containing the coefficients of
\[
f(z)=z^d+a_{d-1}z^{d-1}+\cdots+a_0,
\]
(where $d\geq 2$) and set
\[
R_f=\max\left\{1,\; |a_j|^{1/(d-j)}: 0\leq j< d\right\}.
\]
If $|\alpha|>R_f$ then $|f^k(\alpha)|=|\alpha|^{d^k}\to\infty$ as $k\to\infty$.
\end{lemma}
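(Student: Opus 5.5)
The plan is a direct ultrametric estimate: show that the leading term $z^d$ strictly dominates every other term of $f(\alpha)$, and then iterate.

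Fix $\alpha$ with $|\alpha|>R_f$. For each $0\leq j<d$ we have $|a_j|\leq R_f^{d-j}$. Since $d-j\geq 1$ and $|\alpha|>R_f$, this gives
\[
|a_j\alpha^j|\leq R_f^{d-j}|\alpha|^j<|\alpha|^{d-j}|\alpha|^j=|\alpha|^d .
\]
This is strict even when $a_j=0$, because $|\alpha|>R_f\geq 1>0$. Thus the term $\alpha^d$ strictly dominates every other term of $f(\alpha)$. The strict form of the ultrametric inequality (if one summand has strictly larger absolute value than all the others, the sum has exactly that absolute value) then yields $|f(\alpha)|=|\alpha|^d$.

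Next, because $|\alpha|>R_f\geq 1$ and $d\geq 2$, we get $|f(\alpha)|=|\alpha|^d\geq|\alpha|>R_f$. So $f(\alpha)$ again satisfies the hypothesis. By induction on $k$ we obtain
\[
|f^{k}(\alpha)|=|f^{k-1}(\alpha)|^d=\bigl(|\alpha|^{d^{k-1}}\bigr)^d=|\alpha|^{d^k}.
\]
Finally, $|\alpha|>1$ and $d^k\to\infty$ give $|\alpha|^{d^k}\to\infty$.

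I do not expect a real obstacle. The only points needing care are:
\begin{itemize}
\item the strict inequality must hold for every $j$, which is why $R_f\geq 1$ is built into the definition and why the exponent $1/(d-j)$ is used;
\item the equality (not just an upper bound) in the ultrametric step must be justified by the strict-domination version of the ultrametric inequality.
\end{itemize}
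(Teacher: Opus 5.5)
Your proposal is correct and follows essentially the same route as the paper. Both show that the leading term strictly dominates via $|a_j\alpha^j|<|\alpha|^d$, conclude $|f(\alpha)|=|\alpha|^d$ by the strict ultrametric inequality, and iterate using $|f(\alpha)|=|\alpha|^d\ge|\alpha|>R_f$.
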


\begin{proof}
If $|z|>R_f$ then for each $0\leq j<d$ we have $|z|^{d-j}>|a_j|$, whence $|a_jz^j|<|z|^d$. By the ultrametric inequality $|f(z)|=|z|^d$. Since $|f(z)|=|z|^d>|z|>R_f$, the claim iterates.
\end{proof}

In particular, if $f$ is PCF and $f'$ is not identically zero, then $|f(c)|\leq R_f$ for every critical point $f'(c)=0$. This allows us to confirm Theorem~\ref{thm:pointwise} in the case that $p\mid d$. Note for that derivation that if $f$ is inseparable, then $f(z)=H(z^p)$ for some polynomial $H$ of degree $d/p$, and trivially the coefficients are algebraic (of degree 1) over the field generated by the $d/p$ coefficients of $H$. This extension of $\FF_p$ has transcendence degree at most $d/p$ as it is generated by $d/p$ elements (indeed, transcendence degree at most $d/p-1$ as $H$ is monic with $H(0)=0$).

\begin{lemma}\label{lem:boundPCF}
Let  $k$ be  an algebraically closed field of characteristic $p$, and let \[f(z)=z^d+a_{d-1}z^{d-1}+\cdots+a_1z\in k[z]\] be a PCF polynomial with $p\mid d$. Then either $f$ is inseparable, or else the coefficients of $f$ are algebraic over $F=\FF_p(a_e, a_p, ..., a_{d-p})$,
where $e$ is the maximal index with $p\nmid e$ and $a_e\neq 0$.
\end{lemma}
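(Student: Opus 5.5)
The plan is to verify the algebraicity criterion of Lemma~\ref{lem:valcrit}, with $L=\FF_p(a_1,\ldots,a_{d-1})$ and $F=\FF_p(a_e,a_p,\ldots,a_{d-p})$. Assume $f$ is separable, so that some $a_j$ with $p\nmid j$ is nonzero and $e$ is well defined. Fix a non-archimedean absolute value $|\cdot|$ on $L$ that is trivial on $F$. Then $|a_e|=1$ and $|a_{jp}|\leq 1$ for all $1\leq j<d/p$ (and $|a_{jp}|=0$ or $1$). It suffices to show that $|a_i|\leq 1$ for every $i$ with $p\nmid i$, and for $i>e$ this is trivial since $a_i=0$.

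Split $f=G+E$, where $G(z)=\sum_{p\mid j}a_jz^j$ (with $a_d=1$) and $E(z)=\sum_{p\nmid j}a_jz^j$. Then $G$ is monic of degree $d$ with coefficients of absolute value at most $1$, and $E$ has degree $e<d$. Since $G'=0$, we have
\[
f'(z)=E'(z)=\sum_{p\nmid j} j a_j z^{j-1}.
\]
Because $p\nmid j$, each $j$ is a unit, so $|ja_j|=|a_j|$. In particular $f'$ has degree $e-1$ and its leading coefficient has absolute value $1$.

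Suppose for contradiction that some $|a_i|>1$, and set
\[
\rho=\max_{p\nmid i,\ i<e}|a_i|^{1/(e-i)}>1.
\]
By the Newton polygon of $E'$, or equivalently by the elementary fact that the roots of a polynomial with unit leading coefficient have maximal absolute value equal to the maximum of $|{\rm coeff}_{i}|^{1/(\deg-i)}$, there is a critical point $c$ with $|c|=\rho$. This step needs a little care with the index shift $j\mapsto j-1$, but the exponents $(e-1)-(i-1)=e-i$ match.

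Now estimate $f(c)$. In $G(c)$, the term $c^d$ has absolute value $\rho^d$, and every other term is at most $\rho^{d-p}$. Every term of $E(c)$ satisfies $|a_ic^i|\leq\rho^{e-i}\rho^i=\rho^e<\rho^d$. Hence $|f(c)|=\rho^d$.

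On the other hand, consider $R_f$ from Lemma~\ref{lem:escape}. The terms with $p\mid j$ contribute at most $1$. For $p\nmid j$ we have
\[
|a_j|^{1/(d-j)}\leq\rho^{(e-j)/(d-j)}<\rho,
\]
so $R_f<\rho<\rho^d=|f(c)|$. By Lemma~\ref{lem:escape} the orbit of $f(c)$ escapes to infinity. This contradicts $f$ being PCF with $f'\not\equiv 0$. Hence all $|a_i|\leq 1$, and Lemma~\ref{lem:valcrit} gives algebraicity over $F$.

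The main obstacle is the root-size claim: that some critical point attains $|c|=\rho$, and that $\rho$ simultaneously dominates the coefficients of $E$ in the needed way. I would handle it directly with the Newton polygon of $E'$, whose leading coefficient is a unit.
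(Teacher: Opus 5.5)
Your proof is correct and follows essentially the same argument as the paper: you split $f$ into its $p$-divisible and non-$p$-divisible parts, compare $|f(c)|=\rho^d$ against $R_f\le\rho$ via Lemma~\ref{lem:escape}, and conclude with Lemma~\ref{lem:valcrit}. The only difference is that the paper sets $\rho=\max\{1,|c_1|,\ldots,|c_{e-1}|\}$ using the critical points themselves (with $|\cdot|$ extended to a field containing them, an extension your argument also implicitly needs). A critical point with $|c|=\rho$ then exists by definition, and $|a_j|\le\rho^{e-j}$ follows from the elementary symmetric functions, so the Newton polygon step you flagged as the main obstacle is not needed.
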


\begin{proof}
Let $|\cdot|$ be an absolute value of $k$ which restricts trivially to $F$, suppose that $f$ is separable, and write $f(z)=H(z^p)+G(z)$, where $G\neq 0$ contains no monomial of degree divisible by $p$. We have
\[f'(z)=G'(z)=ea_e\prod_{i=1}^{e-1} (z-c_i),\]
with $e=\deg(G)$
and so (comparing coefficients)
\[ja_j = (-1)^{e-j} ea_e \sigma_{e-j}(c_1, ..., c_{e-1}),\]
where $\sigma_n$ is the elementary symmetric polynomial of degree $n$. For $j$ with $p\nmid j$ and $\rho =\max\{1,  |c_1|, ..., |c_{e-1}|\}$, we immediately obtain from $|a_e|=1$ that
\begin{equation}\label{eq:aj}|a_j|\leq \rho ^{e-j}.\end{equation}
If $\rho>1$ and $c$ is a critical point with $|c|=\rho$, then~\eqref{eq:aj} gives $|G(c)|\leq \rho^e$, while the triviality of $|\cdot|$ on $F$ gives
\[|f(c)-G(c)|=|H(c^p)|=|c|^d=\rho^d>\rho^e,\]
and hence $|f(c)|=\rho^d$. But Lemma~\ref{lem:escape} with~\eqref{eq:aj} and the fact that $f$ is PCF gives $|f(c)|\leq \rho$, contradicting $\rho>1$.

It must be that $\rho\leq 1$, and so $|c_i|\leq 1$ for all $i$. But then~\eqref{eq:aj} gives $|a_j|\leq 1$ for all $j$ with $p\nmid j$. Since $|\cdot|$ was an arbitrary absolute value whose restriction to $F$ was trivial, Lemma~\ref{lem:valcrit} allows us to conclude that $a_j$ is algebraic over $F$ for all $j$.
\end{proof}


\section{The critical nullcone}\label{sec:nullcone}

In this section we let $k$ be an algebraically closed field of characteristic $p>0$, and let $d\geq 2$ with $p\nmid d$. Write
\[
d=np+r,\qquad 1\leq r\leq p-1,\qquad n=\lfloor d/p\rfloor.
\]
Let $k/\FF_p$ be any algebraically closed extension.

The \emph{critical nullcone} $\Null{d}\subseteq\Poly{d}$ is the set of $f\in\Poly{d}$ such that $f(c)=0$ for every critical point $f'(c)=0$. Note that if we set
\begin{equation}\label{eq:critcharpoly}
\Phi_f(T):=\Res_z\!\left(d^{-1}f'(z),\;T-f(z)\right)=\prod_{i=1}^{d-1}\left(T-f(c_i)\right),
\end{equation}
then the coefficients of $\Phi_f(T)-T^{d-1}$ are polynomials in the coefficients of $f$, and the vanishing of these coefficients as functions on $\Poly{d}\cong \AA^{d-1}$ cut out precisely $\Null{d}$. A similar construction is used by Looijenga~\cite{Looijenga}. As noted in the introduction, a counting argument shows that $\Null{d}=\{z^d\}$, the origin in $\Poly{d}$, when $k$ has characteristic zero. 

When $\operatorname{char}(k)=p>0$, the nullcone can have components of higher dimension. For example, when $p=3$ and $d=5$ computing the resultant above shows that the nullcone is defined by the vanishing of the four polynomials
\begin{gather*}
 a_4(a_1 + a_2a_4 + a_3a_4^2 - a_4^4)\\
(a_3-a_4^2)(a_2a_4-a_1)^2\\
 a_1^3a_2 - a_1^3a_3a_4 + a_1^2a_2a_3^2 - a_1^2a_2a_3a_4^2  + a_1a_2^2a_3^2a_4 - a_1a_2^2a_3a_4^3  - a_2^4a_4^3 - a_2^3a_3^3 + a_2^3a_3^2a_4^2 \\
 a_1^2(a_1^3 + a_1^2a_3^2 + a_1a_2a_3^2a_4 + a_1a_3^4
         - a_1a_3^3a_4^2 - a_2^2a_3^3 - a_2^3a_4^3 + a_2^2a_3^2a_4^2).
\end{gather*}
One can then calculate that $\Null{d}$ has four irreducible components, each of dimension 1 and corresponding to the families $z^5+t z^2$, $z^5+tz^3$, $z^5+tz^4+t^2z^3$, and $z^5+tz^4-t^3z^2-t^4z$. Note, in relation to the results below, that the coefficients in each family are algebraic over the coefficient of either $z^3$ or $z^2$.

First, we prove an important reduction in the proof of Theorem~\ref{thm:pointwise}, namely that one is free to consider only PCF polynomials in the nullcone. The general approach here is standard (see, for example, \cite[Theorem~15.17]{Eisenbud}) and a similar approach was employed by Levy~\cite{LevyRigidity}.

\begin{lemma}\label{lem:degen}
Suppose that $p\nmid d$. For every PCF polynomial $f\in \Poly{d}$ there is a polynomial  $g\in \Null{d}$ whose coefficients generate a field of the same transcendence degree as those of $f$.
\end{lemma}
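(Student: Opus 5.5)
We start from the algebraic description of PCF maps in the separable case. Since $p\nmid d$, $f'$ is not identically zero, and each critical point $c_i$ of $f$ has a finite orbit. So for each $i$ there are integers $m_i>l_i\geq 0$ with $f^{m_i}(c_i)=f^{l_i}(c_i)$. We fix such a portrait. Let $K=\FF_p(a_1,\ldots,a_{d-1})$ be the field generated by the coefficients of $f$, and $t=\operatorname{trdeg}(K/\FF_p)$. The point $f$ is the generic point of an irreducible subvariety $V\subseteq\Poly{d}$ of dimension $t$, defined over $\overline{\FF_p}$. By symmetrizing (e.g. using $\prod_i (T-(f^{m_i}-f^{l_i})(c_i))$ via resultants against $f'$, in the spirit of \eqref{eq:critcharpoly}), every point of $V$ lies in the closed subvariety $X\subseteq\Poly{d}$ of polynomials satisfying a fixed finite list of critical relations of this shape. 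Replacing $V$ by the irreducible component of $X$ containing it could only increase dimension. The cleaner route is to keep $V$ itself and use homogeneity.

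The key structure is the weighted $\mathbb{G}_m$-action on $\Poly{d}$ given by conjugation by scaling, $f(z)\mapsto \lambda^{-1}f(\lambda z)$ after renormalizing to be monic. Concretely, $a_j$ gets weight $d-j$. Under this grading the critical points have weight $1$ and the values $f(c)$ have weight $d$. The top-degree part of a relation $f^{m}(c)=f^{l}(c)$ is $f^m(c)=0$ in weighted degree $d^m$, and this leading form only involves the ``leading dynamics''.

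We then take a one-parameter degeneration toward the origin. Consider the family $f_\lambda=\lambda^{-d}f(\lambda z)$ (with coefficients $\lambda^{j-d}a_j$) over $K(\lambda)$ and take the flat limit of $V$ as $\lambda\to 0$ inside $\Poly{d}\times\AA^1$. Equivalently, following \cite[Theorem~15.17]{Eisenbud}, we pass from the ideal $I(V)$ to its initial ideal with respect to the weight $w(a_j)=d-j$, taking highest-weight forms. The special fiber $V_0$ is a weighted-homogeneous (hence $\mathbb{G}_m$-stable, cone) subscheme with every component of dimension $t$ (flatness preserves dimension; pick a component of dimension exactly $t$). Its generic point $g$ then has coefficients generating a field of transcendence degree $t$.

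It remains to show $V_0\subseteq\Null{d}$, and this is the main obstacle. Each relation $\Psi_i:=\Res_z(f', f^{m}(z)-f^{l}(z))$ and its symmetrized forms vanish on $V$, so their leading weighted forms vanish on $V_0$. We need these leading forms to cut out (set-theoretically) a subset of the nullcone. The leading form of $\prod_i (f^{m}(c_i)-f^{l}(c_i))$ is $\prod_i f^{m}(c_i)$ for the leading parts. Because $V_0$ is a cone, a point $g\in V_0$ has its whole $\mathbb{G}_m$-orbit in $V_0$, and a homogeneous argument shows each critical value of $g$ must satisfy $g^{m}(c)=0$ type relations. We then need to upgrade this to $g(c)=0$. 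The plan is to use valuations: for an absolute value trivial on $\overline{\FF_p}$ with the parameter $\lambda$ large, Lemma~\ref{lem:escape} forces the orbit of each critical point of the rescaled map to stay in the disk of radius $R_f$. In the limit this says the critical orbits of $g$ have ``weight zero'', i.e.\ all points of every critical orbit of $g$ are equal to $0$ after the cone limit. I expect the delicate bookkeeping to be showing that all critical points (not just one per multiplicity) satisfy $g(c)=0$; I would handle it by applying the resultant formulation \eqref{eq:critcharpoly} symmetrically, so that the statement becomes that the coefficients of $\Phi_g(T)-T^{d-1}$ vanish. These coefficients are weighted-homogeneous of positive weight, and they are forced into the initial ideal by the escape bound.
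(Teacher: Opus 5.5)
\textbf{There is a genuine gap: the step you flag as the main obstacle, $V_0\subseteq\Null{d}$, is not established, and your sketch of it starts from a miscomputed leading form.}

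\emph{What is right.} Your degeneration is the right one. Taking highest-weight forms for $w(a_j)=d-j$ is the same degeneration as the fibre $t=0$ of the paper's Rees algebra with $A_j=t^{d-j}a_j$, projected to $\Poly{d}$. The dimension count via the principal ideal theorem is also the same. One slip: the coefficients $\lambda^{j-d}a_j$ blow up as $\lambda\to0$, so the highest-weight limit is $\lambda\to\infty$, i.e.\ $t=\lambda^{-1}\to0$.

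\emph{Where it fails.} The weight-$d^m$ part of $f^m(c)$ is not ``$g^m(c)$''. In $f^{k+1}(c)=f^k(c)^d+\sum_j a_jf^k(c)^j$, the terms $a_jf^k(c)^j$ have weight at most $(d-j)+jd^k<d^{k+1}$ once $k\geq1$. So by induction the top-weight part of $f^m(c)$ is $f(c)^{d^{m-1}}$ for every $m\geq1$, while $f^l(c)$ with $l<m$ has weight below $d^m$.
\begin{itemize}
\item A relation of the form $g^m(c)=0$ would not give $g(c)=0$ anyway; it is a post-critical relation, not a nullcone condition. So the ``upgrade'' you plan is needed only because of the miscomputation.
\item Your proposed upgrade says the escape bound ``forces'' the coefficients of $\Phi_g(T)-T^{d-1}$ into the initial ideal. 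It gives no mechanism linking an inequality of absolute values to membership in $\operatorname{in}(I(V))$.
\item The claim that all points of the critical orbits of $g$ become $0$ is false for the critical points themselves.
\end{itemize}

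\emph{The missing idea (the paper's route).} Work in $R=\FF_p[t,A_j,C_i]$, where the critical points $C_i=tc_i$ are adjoined; they are integral over the $A_j$ since $F'(z)=d\prod(z-C_i)$. Then $Y_{i,m}=t^{d^m}f^m(c_i)$ lies in $R$ and satisfies $Y_{i,m}\equiv F(C_i)^{d^{m-1}}\pmod{tR}$. The orbit relation gives $Y_{i,m_i}=t^{d^{m_i}-d^{n_i}}Y_{i,n_i}\in tR$, so $\overline{F}(\overline{C_i})=0$ for each $i$ separately. Adjoining the $C_i$ is also what removes the ``delicate bookkeeping'' over all critical points that you anticipate.

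\emph{Two other ways to close the gap.}
\begin{itemize}
\item To stay inside $\Poly{d}$, first choose a common pair $(m,l)$ for all critical points; your product with $i$-dependent exponents is not symmetric in the $c_i$. The top-weight forms of the coefficients of $\prod_i\bigl(T-(f^m(c_i)-f^l(c_i))\bigr)$ are then, up to sign, $e_k\bigl(f(c_1)^q,\ldots,f(c_{d-1})^q\bigr)$ with $q=d^{m-1}$ (or these vanish identically). Their common vanishing on $V_0$ forces every critical value of $g$ to be $0$.
\item Your escape idea can be made rigorous, but only in valuative form. Take a valuation dominating the local ring at the generic point of a component of the special fibre, with the $C_i$ included. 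Then $|t|<1$ and $|A_j|\leq1$, hence $R_f\leq|t|^{-1}$, and Lemma~\ref{lem:escape} gives $|F(C_i)|=|t|^d|f(c_i)|\leq|t|^dR_f\leq|t|^{d-1}<1$.
\end{itemize}
None of these computations appears in your proposal, so as written the central step is missing.
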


\begin{proof}
Now suppose that \[f(z)=z^d+a_{d-1}z^{d-1}+\cdots +a_1 z\] is PCF, and let $L=\FF_p(a_1, ..., a_{d-1}, c_1, ..., c_{d-1})$ where $f'(z)=d\prod_{i=1}^{d-1}(z-c_i)$. Since the $c_i$ are algebraic over the field generated by the coefficients of $f$, the transcendence degree of $L$ is the same as the transcendence degree of the coefficient field.

In $L[t]$, let $A_j=t^{d-j}a_j$ and $C_i=t c_i$ and let \[R=\FF_p[t, A_1, ..., A_{d-1}, C_1, ..., C_{d-1}]\subseteq L[t]\] be the Rees algebra, which is finitely generated as an algebra over $\FF_p$ and has fraction field $L(t)$. Let $\mathfrak{q}\supseteq tR$ be a minimal prime containing $tR$, and let $E$ be the field of fractions of $R/\mathfrak{q}$. Because $t$ is nonzero and not a unit, the principal ideal theorem shows that $\mathfrak{q}$ has height 1. Since $R$ is finitely generated  over $\FF_p$, we have $\dim(R/\mathfrak{q})+\mathrm{ht}(\mathfrak{q})=\dim R$ which is the transcendence degree of $L(t)$ (see \cite[Theorem~A and Corollary~13.4]{Eisenbud})), so $E$ has transcendence degree one less, or in other words the transcendence degree of $L$ over $\FF_p$.  We will show that there is a $g\in \Null{d}$ whose coefficients generate a field over which $E$ is algebraic.

Set \[F(z)=z^d+A_{d-1}z^{d-1}+\cdots +A_1 z\in R[z].\] A direct computation shows that $F'(z)=d\prod_{i=1}^{d-1}(z-C_i)$.

Let $Y_{i, m}=t^{d^m}f^m(c_i)$ for all $m$, including $Y_{i, 0}=C_i$. Note that, since
\begin{equation}\label{eq:Ynot}Y_{i,m+1} = Y_{i,m}^{d} +   \sum_j A_jt^{(d-j)(d^m-1)}Y_{i,m}^{j},\end{equation}
we have by induction $Y_{i, m}\in R$. Also, a calculation gives $Y_{i, 1}=F(C_i)$, and then an induction with~\eqref{eq:Ynot} gives $Y_{i, m}\equiv F(C_i)^{d^{m-1}}\ (\mathrm{mod}\ tR)$ for all $m\geq 1$.

 Since $f$ is PCF, there exists for each $i$ an $m_i>n_i$ such that $f^{m_i}(c_i)=f^{n_i}(c_i)$, and so \[Y_{i, m_i}=t^{d^{m_i}}f^{m_i}(c_i)=t^{d^{m_i}}f^{n_i}(c_i)=t^{d^{m_i}-d^{n_i}}Y_{i, n_i}\in tR\subseteq  \mathfrak{q}.\]
In other words, if $\overline{x}$ denotes the image of $x\in R$ in the residue field $E$, we have $\overline{F}(\overline{C_i})^{d^{m_i-1}}=0$ for all $i$, and hence $\overline{F}(\overline{C_i})=0$. This shows that $g=\overline{F}\in \Null{d}(E)$. But $R$ is generated over $\FF_p$ by $t\in\mathfrak{q}$ and the $A_i$ and $C_j$, so $E$ is generated over $\FF_p$ by the $\overline{A_i}$ and $\overline{C_j}$. Since the $\overline{C_j}$ are algebraic over the $\overline{A_i}$, the $\overline{A_j}$ on their own generate a subfield of $E$ over which $E$ is algebraic.

We have constructed an example defined over some field $E$, but it is worth noting that we could take our $g$ to be defined over the same algebraically closed field $k$ as $f$. Specifically, if $x_1, ..., x_\tau$ is a transcendence basis of $E/\FF_p$, and $y_1, ..., y_\tau\in\FF_p(a_1, ..., a_{d-1})\subseteq k$ are algebraically independent, then the map $x_i\to y_i$ extends to an embedding $\sigma:E\to k$, since $k$ is algebraically closed, and $\sigma(g)\in \Null{d}(k)$.
\end{proof}

Thus to prove Theorem~\ref{thm:pointwise}, it suffices to prove that the coefficients of any $f\in\Null{d}$ generate an extension of $\FF_p$ of transcendence degree at most $n$. We need a few intermediate claims on algebraicity of coefficients of polynomials.

\begin{lemma}\label{lem:abc}
Let $A$, $B$, $C$ be polynomials with coefficients in $k$ such that $AB=C$, and $A$ is monic, and $B$ (equivalently $C$) is nonzero. Then the coefficients of $A$ and $B$ are algebraic over the coefficients of $C$.
\end{lemma}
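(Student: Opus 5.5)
The plan is to work over an algebraic closure $\overline{K}$ of the field $K$ generated by the coefficients of $C$, and use the fact that $k[z]$ is a unique factorization domain. Since $B\neq 0$ and $A$ is monic, $C=AB$ is nonzero, and its leading coefficient $c$ equals the leading coefficient of $B$. Hence $c\in K$ and $c\neq 0$.

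Over $\overline{K}$ we factor $C=c\prod_{j}(z-\gamma_j)$, where every root $\gamma_j$ lies in $\overline{K}$. The same factorization holds in $k[z]$, taking any embedding of $\overline{K}$ into the algebraically closed field $k$ that extends $K\subseteq k$. The monic polynomial $A$ divides $C$ in $k[z]$. By unique factorization in $k[z]$, $A$ is therefore a product of a sub-multiset of the linear factors $z-\gamma_j$. Its roots are thus among the $\gamma_j$, which are algebraic over $K$. The coefficients of $A$ are, up to sign, elementary symmetric polynomials in these roots, so they are algebraic over $K$.

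For $B$, we have $B=C/A$ computed by polynomial long division. Since $A$ is monic, dividing by $A$ requires no inversion of coefficients beyond $1$. The coefficients of the quotient are therefore polynomial expressions in the coefficients of $C$ and $A$, all of which are now known to be algebraic over $K$. Alternatively, $B=c\prod(z-\gamma_j)$ over the complementary sub-multiset of roots, which gives the same conclusion.

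There is no real obstacle here. The only point needing care is that the root factorization over $\overline{K}$ is compatible with the one in $k[z]$, so that unique factorization applies. This holds because $K(\gamma_1,\dots)$ embeds in $k$, and we may identify it with the algebraic closure of $K$ inside $k$.

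A second route is worth noting, since it is in the spirit of Lemma~\ref{lem:valcrit}. Take an absolute value trivial on $K$ and suppose some root $\alpha$ of $A$ has $|\alpha|>1$. Then $C(\alpha)=0$ forces $|\alpha|\le\max(1,\ldots)$ by the argument of Lemma~\ref{lem:escape} applied to $c^{-1}C$, which is a contradiction. So the roots of $A$ have absolute value at most $1$, the coefficients of $A$ do too, and we conclude by Lemma~\ref{lem:valcrit}. The factorization argument above is simpler, and it is the one I would write.
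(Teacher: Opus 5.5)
Your proof is correct and is essentially the paper's argument: the roots of $A$ and $B$ lie among the roots of $C$, so they are algebraic over the field $K$ generated by the coefficients of $C$. Since $A$ is monic, the leading coefficient of $B$ equals that of $C$ and so lies in $K$, which gives the claim for $B$ as well; your long-division variant for $B$ and the absolute-value aside are fine but not needed.
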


\begin{proof}
Suppose that $B$ has leading coefficient $B_n$. The roots of $C$ are algebraic over the field generated by the coefficients of $C$, so the roots of $A$ and $B$ are algebraic over this field too. But the coefficients of $A$ are contained in the field generated by the roots, so $A$ is defined over some algebraic extension of the field generated by the coefficients of $C$. The same is true for the coefficients of the monic polynomial $B/B_n$. But since $A$ is monic, $B_n$ is also the leading coefficient of $C$ and hence is in the field generated by the coefficients of $C$, so the coefficients of $B$ are algebraic over that field too.
\end{proof}

The following technical lemma replaces the counting argument on orders of vanishing that establishes the description of $\Null{d}$ in the characteristic zero case.

\begin{lemma}\label{lem:phifinite}
Fix integers $t\geq 1$ and $r_1, ..., r_t\geq 1$, set $R=r_1+\cdots +r_t$, assume $p\nmid R$, and set $\ell=\lfloor R/p\rfloor$. Define polynomials \[b_0, ..., b_{R-1}, c_0, ..., c_{t-2} \in \FF_p[\alpha_1, ..., \alpha_t]\] by equating coefficients of powers of $T$ in
\[B(T):=T^R+b_{R-1}T^{R-1}+\cdots +b_0=\prod_{i=1}^t (T-\alpha_i)^{r_i}\]
and
\[C(T):=RT^{t-1}+c_{t-2}T^{t-2}+\cdots +c_0 =  \sum_{i=1}^{t}r_i\prod_{j\neq i}(T-\alpha_j).\]
Then 
each $\alpha_i$ is integral over $\FF_p[b_0, b_p, ..., b_{\ell p}, c_0, ..., c_{t-2} ]$.
\end{lemma}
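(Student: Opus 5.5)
The plan is to reduce the integrality statement to a statement about common zeros, using homogeneity. Give $\alpha_i$ degree $1$. Then $b_j$ is homogeneous of degree $R-j$ and $c_j$ is homogeneous of degree $t-1-j$, so $S=\FF_p[b_0, b_p, \ldots, b_{\ell p}, c_0, \ldots, c_{t-2}]$ is a graded subring of $P=\FF_p[\alpha_1,\ldots,\alpha_t]$ generated by homogeneous elements of positive degree.

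I will use the standard graded criterion. Let $I=(b_0,b_p,\ldots,b_{\ell p},c_0,\ldots,c_{t-2})P$. If $\sqrt{I}=(\alpha_1,\ldots,\alpha_t)$, then $P/I$ is finite-dimensional over $\FF_p$. The graded Nakayama lemma then shows that $P$ is a finitely generated $S$-module (lift a homogeneous basis of $P/I$), so each $\alpha_i$ is integral over $S$. By the Nullstellensatz, it therefore suffices to show the following: if $\alpha\in\overline{\FF_p}^{\,t}$ satisfies $b_{jp}(\alpha)=0$ for $0\le j\le \ell$ and $c_j(\alpha)=0$ for all $j$, then $\alpha=0$.

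So fix such an $\alpha$. Then $C(T)=RT^{t-1}$, and since $C/\prod_i(T-\alpha_i)=B'/B$ as rational functions, we get
\[ B'(T)=R\,T^{t-1}\prod_{i=1}^t (T-\alpha_i)^{r_i-1}. \]
In particular, every root of $B'$ is $0$ or a root of $B$.

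Group the roots of $B$ as $B=T^{m_0}Q$ with $Q=\prod_{k=1}^s(T-\beta_k)^{m_k}$, where the $\beta_k\neq0$ are distinct. Note $m_0\ge1$ because $b_0=0$. A nonzero root $\beta_k$ occurs in $B'$ with multiplicity at least $m_k-1$. The displayed formula gives multiplicity exactly $m_k-1$, once one checks that $\prod(T-\alpha_i)^{r_i-1}$ has multiplicity $m_k-1$ at $\beta_k$ after accounting for coincident $\alpha_i$. If coincidences produce too large a multiplicity, that must be compared against the simple poles of $B'/B$; this bookkeeping with repeated $\alpha_i$ is the step I expect to need the most care.

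Now split on whether $p$ divides $m_0$.

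If $p\nmid m_0$, write $B'=T^{m_0-1}(m_0Q+TQ')$. The factor $m_0Q+TQ'$ does not vanish at $0$, and its degree is $\deg Q$ with leading coefficient $m_0+\deg Q=R\neq0$. Its roots lie among the $\beta_k$ with multiplicities $m_k-1$. Hence $\deg Q\le \deg Q-s$, which forces $s=0$.

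If $p\mid m_0$, then $B'=T^{m_0}Q'$, and $\deg Q'=\deg Q-1$ because $p\nmid R-m_0$. The roots of $Q'$ are roots of $B$; they are not $0$ after comparing multiplicities at $0$ in the displayed formula. So they lie among the $\beta_k$, giving $\deg Q-1\le\deg Q-s$, hence $s\le1$. If $s=1$, then $B=T^{m_0}(T-\beta)^{m}$ with $p\mid m_0$ and $m_0<R$, so $m_0\le \ell p$. The coefficient of $T^{m_0}$ in $B$ is then $(-\beta)^m\neq0$, which contradicts $b_{m_0}=0$.

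In both cases $s=0$, so $B=T^R$ and every $\alpha_i=0$, as required. The remaining work is routine: verifying the grading and the graded Nakayama/Nullstellensatz step, and carefully checking the multiplicity claims at $0$ and at the $\beta_k$ when several $\alpha_i$ coincide.
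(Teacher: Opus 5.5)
Your reduction to showing that the only common zero is $\alpha=0$ matches the paper's. Your case $p\nmid m_0$ is also correct. The multiplicity worry there is harmless: the displayed formula gives multiplicity $m_k-\#\{i:\alpha_i=\beta_k\}\le m_k-1$ at $\beta_k$, so coincidences only lower it, and an upper bound is all your degree count needs.

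\textbf{The gap is in the case $p\mid m_0$.} There you claim the roots of $Q'$ are nonzero "after comparing multiplicities at $0$", and that claim is false. Comparing orders at $0$ in $B'=T^{m_0}Q'$ and in $B'=RT^{t-1}\prod(T-\alpha_i)^{r_i-1}$ gives $\ord_{T=0}Q'=t-1-\#\{i:\alpha_i=0\}$. This is positive as soon as two of the $\alpha_i$ are nonzero. A concrete example: take $p=3$, $(r_1,r_2,r_3)=(3,1,1)$ and $(\alpha_1,\alpha_2,\alpha_3)=(0,1,-1)$. Then $B=T^5-T^3$, $C=2T^2=RT^{t-1}$ and $b_0=0$, so everything you have used at that point holds. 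Yet $m_0=3$, $Q=T^2-1$, $Q'=2T$ vanishes at $0$, and $s=2$. So $s\le 1$ does not follow from the displayed formula and $b_0=0$. This configuration is excluded only by $b_3=-1\neq 0$, a condition you invoke only after assuming $s\le1$.

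\textbf{The repair is the observation you already use for $s=1$, which works for every $s$.} If $p\mid m_0$, then $m_0\neq R$ because $p\nmid R$. Hence $s\ge 1$ and $m_0\le \ell p$. Then $b_{m_0}=Q(0)=\prod_k(-\beta_k)^{m_k}\neq 0$, contradicting $b_{m_0}=0$. So the case $p\mid m_0$ is vacuous and needs no degree count.

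This is exactly how the paper proceeds. It first obtains $p\nmid m_0$ from $b_{m_0}\neq0$, and then compares orders only at $T=0$:
$m_0-1=\ord_{T=0}B'=(t-1)+m_0-\#\{i:\alpha_i=0\}$.
This forces every $\alpha_i=0$ at once. Your global degree count over the nonzero roots in the case $p\nmid m_0$ is valid, but this local comparison at $0$ makes it unnecessary.
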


\begin{proof}
Note that
\begin{equation}\label{eq:deriv}
B'(T)=C(T)\prod_{i=1}^t(T-\alpha_i)^{r_i-1}
\end{equation}
over $\FF_p[\alpha_1, ..., \alpha_t]$.
We are claiming that $\FF_p[\alpha_1, ..., \alpha_t]$ is finitely generated as a module over the subring $\FF_p[b_0,b_p,\ldots,b_{\ell p},c_0,\ldots,c_{t-2}]$. Since the $b_i$ and $c_j$ are homogeneous polynomials of the $\alpha_n$,  it suffices to show that the only solution to
\[b_{0}=b_p=\cdots = b_{\ell p} = c_0 = \cdots = c_{t-2}=0\] in $\overline{\FF_p}$ is at $\alpha_1=\cdots=\alpha_t=0$.

Suppose that for some value of $\alpha_1, ..., \alpha_t\in \overline{\FF_p}$ we have $b_0=\cdots =b_{\ell p}=0$ and $c_0=\cdots =c_{t-2}=0$, so in particular $C(T)=RT^{t-1}$ (with $R\neq 0$). Write $S=\{i :\alpha_i=0\}$ and set $s=\sum_{i\in S}r_i$. Note that
$\ord_{T=0}(B)=s$ and so in particular $b_s\neq 0$, where $b_R:=1$ in the case that $s=R$. Since $b_{ip}=0$ for all $i$ we have $p\nmid s$ and $s\geq 1$. The product rule for the derivative with~\eqref{eq:deriv} then gives
\begin{equation*}s-1=\ord_{T=0}B'(T)=\ord_{T=0} C + \sum_{i\in S}(r_i-1)=t-1+s-\# S.\end{equation*}
We have $\# S = t$ and so $\alpha_i=0$ for all $i$.

\end{proof}

It will be convenient, for any polynomial $g(z)\in k[z]$, to write
\[g(z)=\sum_{i=0}^{p-1} g^{[i]}(z^p)z^i,\]
so that $g(z)=b_m z^m+\cdots + b_0$ gives
\[g^{[i]}(z^p)z^i=\sum_{j\equiv i\ (\mathrm{mod}\ p)} b_j z^j.\]
Note also that
\begin{equation}\label{eq:wild}
\left(W(z^p)\,g(z)\right)^{[j]}(T)=W(T) g^{[j]} (T)
\end{equation}
for any polynomial $W$.

\begin{lemma}\label{lem:twocases}
Let $f\in \Null{d}$ and assume $p\nmid d$. Then the coefficients of $f$ are algebraic over the coefficients of either $f^{[0]}$ or $f^{[r]}$.
\end{lemma}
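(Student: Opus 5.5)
The plan is to factor $f$ according to the multiplicities of its roots modulo $p$. I will then use Lemma~\ref{lem:phifinite} for the non-$p$-power part, and Lemma~\ref{lem:abc} together with~\eqref{eq:wild} to control the pieces. Write $f(z)=\prod_{i}(z-\alpha_i)^{m_i}$ over the distinct roots $\alpha_i$ of $f$, and set $m_i=pq_i+s_i$ with $0\leq s_i<p$. Put
\[B(z)=\prod_i(z-\alpha_i)^{s_i},\qquad H(z)=\prod_i(z-\alpha_i)^{q_i},\]
so $f=BH^p$ with both factors monic. Then $R:=\deg B=d-p\deg H\equiv r\pmod p$, so $p\nmid R$ and in particular $R\geq 1$. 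Relabel the roots with $s_i\geq 1$ as $\alpha_1,\dots,\alpha_t$, with $r_i=s_i$, so that $B$ is exactly the polynomial of Lemma~\ref{lem:phifinite}. Let $C(T)$ be the associated polynomial, so that~\eqref{eq:deriv} holds.

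The key step, which I expect to be the main point, is to show that the coefficients of $C$ are algebraic over those of $H$, using the nullcone condition. Since $f'=B'H^p$, every root $\gamma$ of $C$ is a critical point of $f$, so $f(\gamma)=0$ and $\gamma$ is some root $\alpha$ of $f$. But $\gamma$ cannot be one of $\alpha_1,\dots,\alpha_t$, because $C(\alpha_i)=r_i\prod_{j\neq i}(\alpha_i-\alpha_j)\neq 0$ (as $p\nmid r_i$). So $\gamma$ is a root of $f$ with $s=0$, hence a root of $H$. Thus $C=R\prod(T-\gamma_j)$ with every $\gamma_j$ a root of $H$. The roots of $H$ are algebraic over the coefficients of $H$, so the $c_j$ are too. (If $t=1$ then $C=R$ is constant and there is nothing to check.)

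Next, observe that $H(z)^p=H^{(p)}(z^p)$, where $H^{(p)}$ has the $p$-th powers of the coefficients of $H$. Then~\eqref{eq:wild} gives
\[f^{[j]}(T)=H^{(p)}(T)\,B^{[j]}(T)\quad\text{for every } j.\]
The argument now splits into two cases.

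\emph{Case $f^{[0]}\neq 0$.} Apply Lemma~\ref{lem:abc} with $A=H^{(p)}$ (monic) and $B^{[0]}\neq 0$. This shows that the coefficients of $H^{(p)}$, hence of $H$, and the coefficients $b_0,b_p,\dots,b_{\ell p}$ of $B^{[0]}$ are all algebraic over the coefficients of $f^{[0]}$.

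\emph{Case $f^{[0]}=0$.} Here every $b_{ip}$ vanishes. Since $R\equiv r\pmod p$, the polynomial $B^{[r]}$ is monic, and in particular nonzero. Lemma~\ref{lem:abc} applied to $f^{[r]}=H^{(p)}B^{[r]}$ then shows that the coefficients of $H$ are algebraic over those of $f^{[r]}$.

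In either case, the coefficients of $H$ and all the $b_{ip}$ are algebraic over the chosen field, and by the key step so are $c_0,\dots,c_{t-2}$. Lemma~\ref{lem:phifinite} now makes each $\alpha_i$ integral, hence algebraic, over this field. So the coefficients of $B$ are algebraic over it, and therefore so are the coefficients of $f=BH^p$.
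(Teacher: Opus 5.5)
Your proof is correct and follows essentially the same route as the paper. You use the same factorization $f=BH^p$ with the multiplicities of $B$ below $p$, and the same use of the nullcone condition together with $\gcd(B,C)=1$ to force the roots of $C$ among the roots of $H$; you then apply Lemma~\ref{lem:phifinite} to the roots of $B$ and Lemma~\ref{lem:abc} to $f^{[0]}=H^{(p)}B^{[0]}$ or $f^{[r]}=H^{(p)}B^{[r]}$, exactly as the paper does, only in the reverse order (the paper first proves algebraicity over $\FF_p(b_0,b_p,\ldots,b_{\ell p},\gamma_1,\ldots,\gamma_M)$ and then reduces that field to the coefficients of $f^{[0]}$ or $f^{[r]}$).
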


\begin{proof}
Now, let $f\in \Null{d}$ over some algebraically closed extension $k/\FF_p$. We may write $f(z)=H(z)^p B(z)$ with
\[H(z)=z^M+h_{M-1}z^{M-1}+\cdots + h_0=\prod_{i=1}^M (z-\gamma_i)\]
and
\[B(z)=z^R+b_{R-1}z^{R-1}+\cdots+ b_0,\]
such that $B$ has no root of multiplicity $p$ or greater. Set $R=\ell p + r$, with $0\leq r < p$.
Note that $R\equiv d\not\equiv 0\ (\mathrm{mod}\ p)$, and in particular $\deg(B)\geq 1$. We will show that the coefficients of $f$ are algebraic over $F=\FF_p(b_0, b_p, ..., b_{\ell p}, \gamma_1, ..., \gamma_M)$. Note that this field is generated by $\ell+1+M=n+1$ elements over $\FF_p$ and so has transcendence degree at most $n+1$. But in fact
\[0=f(0)=(-1)^Mb_0\gamma_1^p\cdots \gamma_M^p\]
implies that one of the claimed generators is 0, and so the transcendence degree of $F/\FF_p$ is at most $n$.

We may write $B(z)=\prod_{i=1}^t(z-\alpha_i)^{r_i}$ with $t\geq 1$, $r_i\geq 1$, and $\alpha_i\in k$ distinct. Write also $C(z)= \sum_{i=1}^{t}r_i\prod_{j\neq i}(z-\alpha_j)$ and note that
\[f'(z)=H(z)^pC(z)\prod_{i=1}^t (z-\alpha_i)^{r_i-1}.\]
Since $f\in \Null{d}$, and since $C$ and $B$ are relatively prime,  the roots of $C$ must be among the roots of $H$. Consequently, the coefficients $c_0, ..., c_{t-2}$ of $C(z)$ are contained in $F$, and so by Lemma~\ref{lem:phifinite} the roots $\alpha_1, ..., \alpha_t$ of $B$ are algebraic over $F$. But the coefficients of $f$ are in the extension of $\FF_p(\alpha_1, ..., \alpha_t, \gamma_1, ..., \gamma_M)$ which we have just shown to be finite over $F$ so our first claim is demonstrated.

Write $H(z)^p=\tilde{H}(z^p)$, so the coefficients of $\tilde{H}$ are $h_0^p, ..., h_M^p$. Note that the roots of $H$ are algebraic over the coefficients of $H$, and hence over the coefficients of $\tilde{H}$, and so $F$ is an algebraic extension of field generated by the coefficients of $\tilde{H}$ and those of $B^{[0]}$. If $B^{[0]}\neq 0$, then $f^{[0]}=\tilde{H}B^{[0]}$ gives us that the coefficients of $\tilde{H}$ and $B^{[0]}$ are algebraic over the coefficients of $f^{[0]}$ by Lemma~\ref{lem:abc}. If, on the other hand, $B^{[0]}=0$, we have just shown that the coefficients of $f$ are algebraic over the field generated by the coefficients of $\tilde{H}$. For any $j$ with $f^{[j]}\neq 0$, the relation $f^{[j]}=\tilde{H}B^{[j]}$ gives that the coefficients of $\tilde{H}$ are algebraic over the coefficients of $f^{[j]}$. Note that $f^{[r]}\neq 0$ as $b_R=1$.
\end{proof}

We are now in a position to establish all but the sharpness claim in Theorem~\ref{thm:pointwise} in the case $p\nmid d$.

\begin{lemma}\label{lem:fofr}
Suppose that $p\nmid d$. The coefficients of any PCF polynomial $f\in \Poly{d}$ generate an extension of $\FF_p$ of transcendence degree at most $n$.
\end{lemma}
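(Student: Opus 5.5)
By Lemma~\ref{lem:degen}, it suffices to show that for every $g\in\Null{d}$ the coefficients of $g$ generate an extension of $\FF_p$ of transcendence degree at most $n$. Indeed, for a PCF $f\in\Poly{d}$, Lemma~\ref{lem:degen} produces $g\in\Null{d}$ whose coefficient field has the same transcendence degree as that of $f$. So the plan is to bound the transcendence degree of the coefficient field of $g$. By Lemma~\ref{lem:twocases}, this field is algebraic over the field generated by the coefficients of $g^{[0]}$ or by those of $g^{[r]}$. We therefore only need to count how many of those coefficients can be nonzero, or at least how many are not forced to be zero.

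First consider $g^{[0]}$. Its coefficients are those $a_j$ with $p\mid j$ and $0\leq j\leq d$. The leading term $z^d$ contributes nothing, since $p\nmid d$. The constant term is $a_0=0$, since $g\in\Poly{d}$. The remaining indices are $j=p,2p,\ldots$ with $jp<d$, giving at most $n$ coefficients. So in this case the transcendence degree is at most $n$.

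Next consider $g^{[r]}$. Its coefficients are the $a_j$ with $j\equiv r \pmod p$, where $d=np+r$. These are $j=r, r+p, \ldots, r+(n-1)p$ together with $j=d$, whose coefficient is $1$. That again leaves at most $n$ possibly transcendental coefficients. Since $1\leq r\leq p-1$, the index $j=0$ is not in this residue class, so we do not even need $a_0=0$ here.

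In either case the coefficient field of $g$ is algebraic over a field generated by at most $n$ elements, so its transcendence degree is at most $n$. The main (mild) subtlety is that Lemma~\ref{lem:twocases} is applied to $g$ over the field $E$ (or over $k$ via the embedding $\sigma$ constructed at the end of the proof of Lemma~\ref{lem:degen}). That lemma was proved over an algebraically closed field, so we pass to the algebraic closure of $E$. This does not change transcendence degrees, so nothing else is needed.
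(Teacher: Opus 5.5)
Your proposal is correct and follows essentially the paper's own proof: reduce to $\Null{d}$ via Lemma~\ref{lem:degen}, apply Lemma~\ref{lem:twocases}, and count at most $n$ non-constant coefficients in $g^{[0]}$ (using $a_0=0$) or in the monic $g^{[r]}$. One trivial slip: the indices for $g^{[0]}$ should be $j=p,2p,\ldots,np$ with $j<d$, not ``$jp<d$''. Your remark about passing to $\overline{E}$ is a harmless extra check that the paper leaves implicit.
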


\begin{proof}
From Lemma~\ref{lem:degen}, it suffices to prove that the coefficients of any $f\in \Null{d}$ generate an extension of transcendence degree at most $n$. By Lemma~\ref{lem:twocases}, the coefficients of $f$ are algebraic over the coefficients of either $f^{[0]}$ or $f^{[r]}$. Since $f(0)=0$, the polynomial $f^{[0]}$ is a sum of at most $n$ monomials, and so in this case we have the coefficients of $f$ algebraic over an extension of $\FF_p$ by $n$ or fewer coefficients of $f$. The polynomial $f^{[r]}$ is monic of degree $n$, so $f^{[r]}$ has at most $n$ coefficients not already in $\FF_p$. Again, we have the coefficients of $f$ algebraic over a field generated by at most $n$ of these coefficients.
\end{proof}


\section{Maximal families}\label{sec:max}

In this section we prove the following claim, which shows the sharpness of the bound in the main theorem.

\begin{lemma}\label{lem:ptw}
For every $d\geq 2$ with $(p, d)\neq (2, 2)$, there exist infinitely many  PCF $f\in \Poly{d}$, no two related by a conjugacy composed with a field automorphism, such that the coefficients of $f$ generate an extension of $\FF_p$ of transcendence degree  exactly $\lfloor d/p\rfloor$.
\end{lemma}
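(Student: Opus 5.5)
We construct explicit families and separate them by their critical portraits, which conjugacy and field automorphisms both preserve. The construction splits according to whether $p\nmid d$ or $p\mid d$.

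\emph{Case $p\nmid d$.} Write $d=np+r$ with $1\leq r\leq p-1$. The basic example is $f(z)=z^rH(z)^p$ with $H$ monic of degree $n$ and generic coefficients. Its derivative is $f'=rz^{r-1}H^p$, so the critical points are $0$ (when $r\geq 2$) and the roots of $H$, all of which map to $0$.
\begin{itemize}
\item If $r\geq 2$, then $0$ is a fixed critical point and $f$ is PCF with $n$ free parameters.
\item If $r=1$, the roots of $H$ are critical and map to the fixed point $0$, so $f$ is again PCF.
\end{itemize}
Either way the coefficients of $H$ are algebraic over those of $f$ by Lemma~\ref{lem:abc}. Hence the transcendence degree is exactly $n$ when the coefficients of $H$ are algebraically independent.

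To get infinitely many examples with distinct portraits, we impose one extra condition and recover the lost parameter elsewhere. Let $f(z)=B(z)H(z)^p$, where $B$ is a polynomial of degree $r$ (or degree $r+p$), chosen so that the critical points coming from $B'$ land on a cycle of prescribed exact period $m$ after a prescribed preperiod.

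A cleaner option is to fix the portrait through an $H$-root:
\begin{itemize}
\item Require that $0$ has exact period $m$, i.e.\ $f(0)=\beta_1$, $f(\beta_1)=\beta_2$, and so on.
\item Alternatively, require that a critical root $\gamma$ of $H$ map to $0$ while $0$ itself has large preperiod. For this, take $f(z)=(z-\alpha)^rH(z)^p+\text{const}$, adjusted so that $f(0)=0$.
\end{itemize}
The family $f=(z-a)^rH(z)^p-(-a)^rH(0)^p$ has its critical points at $a$ and at the roots of $H$, all mapping to the single value $v=-(-a)^rH(0)^p$. So $f$ is PCF exactly when the one point $v$ is preperiodic. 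Imposing $f^{m}(v)=f^{m'}(v)$ is one equation in $n+1$ parameters, so each component has dimension $\geq n$. Components with different pairs $(m,m')$ give distinct portraits, and hence infinitely many examples.

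\emph{Main obstacle for $p\nmid d$.} Two things must be shown:
\begin{itemize}
\item These loci are nonempty with the exact orbit length, and not degenerate (for instance $v=0$ always).
\item The transcendence degree is exactly $n$ rather than $n+1$. The upper bound is already supplied by Lemma~\ref{lem:fofr}.
\end{itemize}
For nonemptiness, the plan is to specialize $H$ to $z^n$ or a similar simple choice where the resulting one-variable equation in $a$ visibly has roots of exact period. Then a dimension count (each component has dimension $\geq n$, and the exact-period condition is open) finishes the argument. The case $n=0$ is the rigid case, where we only need infinitely many PCF points; these are supplied by the Misiurewicz/periodic parameters of $z^d+az$-type families over $\overline{\FF_p}$.

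\emph{Case $p\mid d$.} Use $f(z)=G(z^p)+\lambda z$ with $G$ monic of degree $d/p$. The derivative is the constant $f'=\lambda$, so for $\lambda\neq 0$ the critical locus is empty and $f$ is PCF. This gives $d/p$ free parameters: $\lambda$ together with the $d/p-1$ nonleading coefficients of $G$, with $G(0)=0$. By Lemma~\ref{lem:boundPCF} this is optimal.

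All such maps have the same (empty) portrait, so to distinguish infinitely many of them we instead consider inseparable maps $f=G(z^p)$. Every point is critical for these, but they have only $d/p-1$ parameters. We therefore keep the separable family and separate examples by another invariant preserved under conjugacy and field automorphisms: the structure of the periodic points, or the field of moduli degree of a specialization. As a fallback, use $G(z^p)+\lambda z+\mu z^{e}$ with one critical point forced into cycles of varying length.

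The exception $(2,2)$ occurs because there $\lfloor d/p\rfloor=1$ but the maximal family is all of $\Poly{2}$, leaving only one component.
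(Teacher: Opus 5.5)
Your proposal has two genuine gaps. The first means the $p\mid d$ case does not go through as written.

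\textbf{The case $p\mid d$.} The critical-point-free family $G(z^p)+\lambda z$ yields at most one example in the sense of the lemma, so no choice of invariant can rescue it. Its members of transcendence degree exactly $n$ are precisely those whose $n$ free coefficients $\lambda,g_1,\dots,g_{n-1}$ are algebraically independent. For any two such members, the isomorphism $\FF_p(\lambda,g_1,\dots,g_{n-1})\to\FF_p(\lambda',g_1',\dots,g_{n-1}')$ matching coefficients extends to an automorphism of any algebraically closed field containing both, since the two subfields have the same transcendence degree. So all these maps are related by a field automorphism, and neither periodic-point structure nor a field-of-moduli invariant can separate them.

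Your fallback $G(z^p)+\lambda z+\mu z^e$ is the right shape. The paper uses $z^p+\lambda z^2+Tz$ for $d=p$, and $z^d+\lambda z^{p-1}(z-\zeta)^{(n-1)p}+Tz^p+\sum_{j=2}^{n-1}u_jz^{pj}$ for $d=np$ with $n\geq 2$; each has a single non-fixed critical point. But you supply nothing that makes the fallback work:
\begin{enumerate}
\item $e$ must be chosen so that $f'=\lambda+e\mu z^{e-1}$ has a single root (e.g.\ $e=2$ for odd $p$). A general $e$ gives several critical points, and forcing one of them into a cycle does not make $f$ PCF.
\item The parameter count has to be checked to be $n+1$.
\item Crucially, you must show that the orbit condition on that critical point has solutions of exact preperiod $m$ for infinitely many $m$, with the solutions still of transcendence degree $n$.
\end{enumerate}

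\textbf{The case $p\nmid d$.} The family $f=(z-a)^rH(z)^p-(-a)^rH(0)^p$ is legitimate: all critical values equal $v=-(-a)^rH(0)^p$. Your dimension count would finish the argument once you have a single point of $\AA^{n+1}$ with a given exact portrait. Such a point lies on an $n$-dimensional component of the orbit-relation hypersurface, and openness of the exact-portrait condition passes the portrait to that component's generic point. Also, $(a,H)\mapsto f$ is generically finite by Lemma~\ref{lem:abc} applied to $f+v$, with $v=f(c)$ algebraic over the coefficients.

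But producing such points for infinitely many portraits is exactly the substance of the paper's proof, and you only gesture at it. In the paper this is the degree computation for $P_m(T)=F_T(f_T^{m-1}(\zeta))$, including the cancellation case $d=p+1$, together with $F_T(0)\neq 0$ to guarantee exact preperiod. Moreover, the degenerate locus $\{v=0\}=\{a=0\}\cup\{H(0)=0\}$ satisfies every orbit relation, and your suggested specialization $H=z^n$ lies inside it.

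With $H$ fixed and $H(0)\neq 0$, you would still need to show that the one-variable polynomial $F_a(f_a^{m-1}(v))$, where $f_a=zF_a$, has a root $a\neq 0$. When $r\geq 2$ it always vanishes at $a=0$, and you must watch for leading-term cancellation as in the paper's case $d=p+1$. Given such a root, $F_a(0)=f_a'(0)=r(-a)^{r-1}H(0)^p\neq 0$ forces exact preperiod $m$. Until this is carried out, you have not shown that even one nontrivial portrait is realized with transcendence degree $n$.

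Separately, your option ``$0$ has exact period $m$'' is impossible in $\Poly{d}$, since $f(0)=0$.
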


\begin{proof}
Suppose first that $d=np+r$ with $r\neq 0$ and $n\geq 1$, let $\zeta, u_2, ..., u_n$ be algebraically independent over $\FF_p$, and let $K=\overline{\FF_p(\zeta, u_2, ..., u_n)}$. Over $K[T]$, let
\[F_T(z)=(z-\zeta)^{np}+Tz^{p-r}+\sum_{i=2}^n u_iz^{ip-r},\]
and $f_T(z)=z^rF_T(z)$
so that
\[f'_T(z)=rz^{r-1}(z-\zeta)^{np}.\]
Note that $f_T$ is PCF if and only if the orbit of $\zeta$ is finite, and note also that since $u_2, ..., u_n, (-\zeta)^{np}$ are among the coefficients of $f$, the subfield of $K$ generated by the coefficients after specializing to any $T=u_1$ algebraic over this field still has transcendence degree $n$.

For $P_m(T)=F_T(f^{m-1}_{T}(\zeta))$ we claim that choosing $u_1\in K$ with $P_m(u_1)=0$ will cause $\zeta$ to have exact preperiod $m$ under $f_{u_1}$. Certainly, $P_m(u_1)=0$ implies $f^m_{u_1}(\zeta)=0$, by definition. On the other hand, $f^{m-1}_{u_1}(\zeta)=0$ implies $P_m(u_1)=F_{u_1}(0)=(-\zeta)^{np}\neq 0$, a contradiction, so it must be that $\zeta$ has exact preperiod $m$ under $f_{u_1}$. It remains to show that $P_m(T)$ is nonconstant, so that we may choose such a $u_1$. One can confirm directly that $P_1(T)$ has degree 1 in $T$.  Note that for any non-constant $Q(T)\in K[T]$, we have $\deg_T(F_T(Q))=np\deg_T(Q)$ unless possibly there is cancellation between the terms $Q^{np}$ and $T Q^{p-r}$. For these to be of the same degree, it must be that $((n-1)p+r)\deg(Q)=1$ and hence $\deg(Q)=n=r=1$, so $d=p+1$. If we are not in this case, $P_m(T)$ is non-constant in $T$ for all $m\geq 1$, (indeed $\deg(P_m)=npd^{m-2}$ for $m\geq 2$) so we may choose $u_1\in K$ with $P_m(u_1)=0$. Note that the polynomials $f_{u_1}(z)\in K[z]$ are PCF and pairwise non-conjugate for different $m$, as the preperiods of the critical orbits are invariant under conjugacy and field automorphisms.

The case $\deg(Q)=1$ and $d=p+1$ occurs only once. Here we have $f^0_T(\zeta)=\zeta$, $f_T(\zeta)= T\zeta^{p}+O(1)$, where $O(1)$ is a term in $K$, and so
\[P_2(T)=\zeta^{p^2-p}(\zeta^p+1)T^p-\zeta^p,\]
showing that the polynomial has the expected degree despite the potential for cancellation in the leading coefficient.

In the case $d=np$ with $n\geq 2$, we consider the similar family
\[f_T(z)=z^d+\lambda z^{p-1}(z-\zeta)^{(n-1)p}+Tz^p+\sum_{j=2}^{n-1} u_j z^{pj}\]
with
\[f'(z)=-\lambda z^{p-2}(z-\zeta)^{(n-1)p},\]
so again $f$ is PCF if and only if $\zeta$ is preperiodic for $f$. We take $\lambda, \zeta, u_2, .., u_{n-1}$ to be algebraically independent and solve for $u_1$. Note that the coefficients of $f$ include $\lambda$, $\lambda(-\zeta)^{(n-1)p}, u_2, ..., u_{n-1}$ which are algebraically independent, and specializing at $T=u_1$ algebraic over $\FF_p(\lambda, \zeta, u_2, ..., u_{n-1})$ gives a polynomial whose coefficients generate a field of transcendence degree $n$ over $\FF_p$.

Let
\[F_T(z)=z^{d-p+1}+\lambda (z-\zeta)^{(n-1)p}+T z+\sum_{j=2}^{n-1} u_j z^{pj-p+1},\]
and let $P_m(T)=F_T(f^{m-1}_T(\zeta))$. As above, $f_T(z)=z^{p-1}F_T(z)$, so $\zeta$ has tail length exactly $m$ for $f_{u_1}(z)$ whenever $P_m(u_1)=0$. It suffices to show that the $P_m$ are nonconstant, but we check that for $m\geq 2$ we have
\[\deg(P_m)=(d-p+1)d^{m-2},\]
while $\deg(P_1)=1$.
The roots of the $P_m$ produce infinitely many pairwise non-conjugate PCF polynomials with $n$ algebraically independent coefficients.

We are left with the cases $d\leq p$. For $d<p$ we simply need to exhibit infinitely many pairwise non-conjugate PCF maps, which we can do by considering the family $f_T(z)=z^d+Tz^{d-1}=z^{d-1}(z+T)$. The critical points are $\zeta=-(d-1)T/d$ and, if $d\geq 3$, the fixed point $z=0$. Letting $P_m(T)=f_T^{m-1}(\zeta)+T$, we have $P_1(T)=T/d$ and $\deg(P_m)=d^{m-1}$ for $m\geq 2$, while $\ord_T(P_m)=1$ for all $m\geq 1$. So for each $m\geq 2$ the polynomial $P_m$ has a nonzero root $u_1\in\overline{\FF_p}$. Since $f_{u_1}$ vanishes only at $0$ and $-u_1$, the equation $P_m(u_1)=0$ says that $f_{u_1}^{m-1}(\zeta)=-u_1\neq 0$ and $f_{u_1}^m(\zeta)=0$, and any earlier zero in the orbit of $\zeta$ would force $f_{u_1}^{m-1}(\zeta)=0$. So $\zeta$ has preperiod exactly $m$ under $f_{u_1}$, while the only other critical point, when there is one, is fixed.

For $d=p\neq 2$ let $\lambda$ be transcendental over $\FF_p$ and use $f_T(z)=z^p+\lambda z^2+Tz$. Here $\zeta=-T/(2\lambda)$ is the unique critical point. Write $f_T(z)=zF_T(z)$, so that $F_T(z)=z^{p-1}+\lambda z+T$, and let $P_m(T)=F_T(f^{m-1}_T(\zeta))$. One confirms that $\deg(P_m) = (p-1) p^{m-1}$ and $\ord_T(P_m)=1$ for all $m\geq 1$, so each $P_m$ has a nonzero root $u_1$, algebraic over $\FF_p(\lambda)$. For such a root we have $f^m_{u_1}(\zeta)=0$, while $f^{m-1}_{u_1}(\zeta)=0$ would give $P_m(u_1)=F_{u_1}(0)=u_1\neq 0$. So $\zeta$ has preperiod exactly $m$ under $f_{u_1}$, and the coefficients $\lambda$ and $u_1$ of $f_{u_1}$ generate a field of transcendence degree exactly $1=\lfloor d/p\rfloor$. These specializations again have distinct critical portraits for distinct $m$.

In the case $d=p=2$, the polynomial $f(z)=z^2+tz$ is the generic point of $\Poly{2}$ and is PCF.
\end{proof}


\section{Proofs of the main results}\label{sec:proofs}

We can now pull together the threads needed to establish the results claimed in the introduction. First we make a quick remark about the assertion that the PCF locus in $\Poly{d}$ is a countable union of subvarieties. 
To see this, let \[f(z)=z^d+a_{d-1}z^{d-1}+\cdots +a_1z\in \FF_p[a_1, ..., a_{d-1}, z]\]
be the universal polynomial of degree $d$, and for $m>n\geq 0$ write
\[\Psi_{m,n}=\operatorname{Res}_z(f'(z), T-(f^m(z)-f^n(z)))=\sum_{i=0}^{\deg(f')} \psi_{i, m, n} T^i,\]
where the $\psi_{i, m, n}$ are polynomials in the $a_j$. Let $D=\deg(f')$ be the degree of the derivative of the generic polynomial, which is $d-1$ if $p\nmid d$ and $d-2$ if $p\mid d$. When $p\nmid d$, $\psi_{D, m, n}$ is a unit, and $f$ is PCF if and only if $\Psi_{m, n}=\psi_{D, m, n}T^{D}$ for some $m>n\geq 0$, since this is precisely the condition that every critical point satisfies some non-trivial orbit relation (without loss of generality, there is a single relation satisfied by all critical points). When $p\mid d$, $\psi_{D, m, n}$ need not be a unit, but it is still true that $f$ is PCF if and only if $\Psi_{m, n}$ is a monomial in $T$ for some $m>n\geq 0$, or is 0. Writing $Z_{m n}$ for the closed subset defined by the vanishing of the $\psi_{i, m, n}\psi_{j, m, n}$ for all $0\leq i\neq j\leq D$, the PCF locus is exactly the union of the $Z_{m, n}$ over $m>n\geq 0$.

%
%

\begin{proof}[Proof of Theorem~\ref{thm:pointwise}]
The upper bound in the case $p\nmid d$ is established by Lemma~\ref{lem:fofr}, while the case $p\mid d$ is handled by Lemma~\ref{lem:boundPCF}.  The examples in Section~\ref{sec:max} show the strong sharpness of these bounds. 
\end{proof}

\begin{proof}[Proof of Theorem~\ref{thm:main}]
The equivalence of Theorems~\ref{thm:main} and~\ref{thm:pointwise} is essentially by definition.

 If $V$ is an irreducible component of the PCF locus in $\Poly{d}$ defined over an algebraically closed field $k$, then the function field $k(V)$ has transcendence degree $\dim(V)$ over $k$, and the coordinate functions $a_{d-1}, ..., a_1$ on $V$ give a PCF polynomial $f(z)=z^d+a_{d-1}z^{d-1}+\cdots +a_1z$ over $k(V)$, and so \[\dim(V)=\operatorname{trdeg}_k k(a_1, ..., a_{d-1})\leq \operatorname{trdeg}_{\FF_p} \FF_p(a_1, ..., a_{d-1})\leq n\] by Theorem~\ref{thm:pointwise}.

  On the other hand, let $f(z)\in\Poly{d}$ be a PCF polynomial whose coefficients generate an extension $F$ of $\FF_p$ of transcendence degree $s$. Since $\overline{\FF_p}$ is algebraic over $\FF_p$, the field $F\overline{\FF_p}$ has transcendence degree $s$ over $\overline{\FF_p}$, and it is the function field of an irreducible subvariety $W$ of $\Poly{d}$ over $\overline{\FF_p}$ of dimension $s$, namely the closure of the point whose coordinates are the coefficients of $f$. As $\overline{\FF_p}$ is algebraically closed, $W$ is geometrically irreducible, so $V=W\times_{\overline{\FF_p}}k$ is an irreducible subvariety of $\Poly{d}$ over $k$ of dimension $s$. The coordinates of the generic point of $V$ satisfy exactly the same polynomial relations over $\FF_p$ as the coefficients of $f$, and being PCF is detected by such relations (the equations defining the $Z_{m, m'}$ above), so the generic point of $V$ is PCF and $V$ is contained in the PCF locus.

 Now take, for each $m\geq 2$, the polynomial $f_m$ constructed in the proof of Lemma~\ref{lem:ptw} (excluding the case $d=p=2$), whose coefficients generate a field of transcendence degree $n$ and which has a critical point of exact preperiod $m$, every other critical point being fixed. The construction above gives a subvariety $V_m\subseteq\Poly{d}$ over $k$ of dimension $n$ with PCF generic point. Since the critical orbit relations are also polynomial relations over $\FF_p$ in the coefficients, the generic point of $V_m$ has maximal critical preperiod exactly $m$, and so $V_m\neq V_{m'}$ for $m\neq m'$. Indeed the $V_m$ are pairwise non-conjugate, as conjugacy preserves critical preperiods. Finally, the dimension bound prevents $V_m$ from being properly contained in any larger subvariety of the PCF locus, so each $V_m$ is a component.
\end{proof}
 

\begin{proof}[Proof of Theorem~\ref{thm:char2}] In the case of even $d$, Lemma~\ref{lem:boundPCF} already gives an explicit subset of the coefficients of $f$ over which the others are algebraic, for a PCF polynomial of degree $d$. So we consider just the case of odd $d$. A key relation holds here, namely
\[f(z)-zf'(z)=\sum_{i=1}^n a_{2i}z^{2i}.\]
First, suppose that $a_{2i}=0$ for all $i$. Then $f(z)=zH(z)^2$ for some polynomial $H$, and we are in the first case of the theorem. Suppose otherwise that some $a_{2i}\neq 0$, and let $j$ be the maximal such index. Then for any absolute value $|\cdot|$ on the field generated by the coefficients and critical points of $f$ trivial on $F=\FF_p(a_2, a_4, ..., a_{d-1})$, if we have a $f'(c)=0$ with $|c|>1$ then
\[|f(c)|=\left|\sum_{i=1}^n a_{2i}c^{2i}\right|=|c|^{2j}>|c|.\]
Taking $|c|$ maximal, and arguing as in the escape-rate section, we contradict $f$ being PCF. So it must be that $|c|\leq 1$ for every critical point of $f$, and since $|\cdot|$ was arbitrary we see that the critical points of $f$ are algebraic over $F$. But
\[\prod_{i=1}^{d-1} (z-c_i)=f'(z)=z^{d-1}+a_{d-2}z^{d-3}+\cdots + a_1,\]
and so the odd-index coefficients of $f$ are contained in the field generated by the critical points. We have that the coefficients of $f$ are algebraic over $F$ as claimed.
\end{proof}


\end{document}